\title{Compact graphings}
\author{L\'aszl\'o Lov\'asz\\
Hungarian Academy of Sciences and E\"otv\"os Lor\'and University\\
Budapest}
\date{\today}
\long\def\ignore#1{}
\begin{document}

\newtheorem{theorem}{Theorem}
\newtheorem{prop}[theorem]{Proposition}
\newtheorem{lemma}[theorem]{Lemma}
\newtheorem{claim}{Claim}
\newtheorem{corollary}[theorem]{Corollary}
\theorembodyfont{\rmfamily}
\newtheorem{remark}[theorem]{Remark}
\newtheorem{example}{Example}
\newtheorem{conj}{Conjecture}
\newtheorem{problem}[theorem]{Problem}
\newtheorem{step}{Step}
\newtheorem{alg}{Algorithm}
\newenvironment{proof}{\medskip\noindent{\bf Proof. }}{\hfill$\square$\medskip}

\def\R{\mathbb{R}}
\def\one{\mathbbm1}
\def\T{^{\sf T}}
\def\Pr{{\sf P}}
\def\E{{\sf E}}
\def\Q{{\mathbf Q}}
\def\bd{\text{bd}}
\def\eps{\varepsilon}
\def\wh{\widehat}
\def\cork{\text{\rm corank}}
\def\rank{\text{\rm rank}}
\def\Ker{\text{\rm Ker}}
\def\rk{\text{\rm rank}}
\def\supp{\text{\rm supp}}
\def\diag{\text{\rm diag}}
\def\sep{\text{\rm sep}}
\def\tr{\text{\rm tr}}
\def\iso{{h}}

\def\AA{\mathcal{A}}\def\BB{\mathcal{B}}\def\CC{\mathcal{C}}
\def\DD{\mathcal{D}}\def\EE{\mathcal{E}}\def\FF{\mathcal{F}}
\def\GG{\mathcal{G}}\def\HH{\mathcal{H}}\def\II{\mathcal{I}}
\def\JJ{\mathcal{J}}\def\KK{\mathcal{K}}\def\LL{\mathcal{L}}
\def\MM{\mathcal{M}}\def\NN{\mathcal{N}}\def\OO{\mathcal{O}}
\def\PP{\mathcal{P}}\def\QQ{\mathcal{Q}}\def\RR{\mathcal{R}}
\def\SS{\mathcal{S}}\def\TT{\mathcal{T}}\def\UU{\mathcal{U}}
\def\VV{\mathcal{V}}\def\WW{\mathcal{W}}\def\XX{\mathcal{X}}
\def\YY{\mathcal{Y}}\def\ZZ{\mathcal{Z}}

\def\Ab{\mathbf{A}}\def\Bb{\mathbf{B}}\def\Cb{\mathbf{C}}
\def\Db{\mathbf{D}}\def\Eb{\mathbf{E}}\def\Fb{\mathbf{F}}
\def\Gb{\mathbf{G}}\def\Hb{\mathbf{H}}\def\Ib{\mathbf{I}}
\def\Jb{\mathbf{J}}\def\Kb{\mathbf{K}}\def\Lb{\mathbf{L}}
\def\Mb{\mathbf{M}}\def\Nb{\mathbf{N}}\def\Ob{\mathbf{O}}
\def\Pb{\mathbf{P}}\def\Qb{\mathbf{Q}}\def\Rb{\mathbf{R}}
\def\Sb{\mathbf{S}}\def\Tb{\mathbf{T}}\def\Ub{\mathbf{U}}
\def\Vb{\mathbf{V}}\def\Wb{\mathbf{W}}\def\Xb{\mathbf{X}}
\def\Yb{\mathbf{Y}}\def\Zb{\mathbf{Z}}

\def\ab{\mathbf{a}}\def\bb{\mathbf{b}}\def\cb{\mathbf{c}}
\def\db{\mathbf{d}}\def\eb{\mathbf{e}}\def\fb{\mathbf{f}}
\def\gb{\mathbf{g}}\def\hb{\mathbf{h}}\def\ib{\mathbf{i}}
\def\jb{\mathbf{j}}\def\kb{\mathbf{k}}\def\lb{\mathbf{l}}
\def\mb{\mathbf{m}}\def\nb{\mathbf{n}}\def\ob{\mathbf{o}}
\def\pb{\mathbf{p}}\def\qb{\mathbf{q}}\def\rb{\mathbf{r}}
\def\sb{\mathbf{s}}\def\tb{\mathbf{t}}\def\ub{\mathbf{u}}
\def\vb{\mathbf{v}}\def\wb{\mathbf{w}}\def\xb{\mathbf{x}}
\def\yb{\mathbf{y}}\def\zb{\mathbf{z}}

\def\Abb{\mathbb{A}}\def\Bbb{\mathbb{B}}\def\Cbb{\mathbb{C}}
\def\Dbb{\mathbb{D}}\def\Ebb{\mathbb{E}}\def\Fbb{\mathbb{F}}
\def\Gbb{\mathbb{G}}\def\Hbb{\mathbb{H}}\def\Ibb{\mathbb{I}}
\def\Jbb{\mathbb{J}}\def\Kbb{\mathbb{K}}\def\Lbb{\mathbb{L}}
\def\Mbb{\mathbb{M}}\def\Nbb{\mathbb{N}}\def\Obb{\mathbb{O}}
\def\Pbb{\mathbb{P}}\def\Qbb{\mathbb{Q}}\def\Rbb{\mathbb{R}}
\def\Sbb{\mathbb{S}}\def\Tbb{\mathbb{T}}\def\Ubb{\mathbb{U}}
\def\Vbb{\mathbb{V}}\def\Wbb{\mathbb{W}}\def\Xbb{\mathbb{X}}
\def\Ybb{\mathbb{Y}}\def\Zbb{\mathbb{Z}}

\def\Af{\mathfrak{A}}\def\Bf{\mathfrak{B}}\def\Cf{\mathfrak{C}}
\def\Df{\mathfrak{D}}\def\Ef{\mathfrak{E}}\def\Ff{\mathfrak{F}}
\def\Gf{\mathfrak{G}}\def\Hf{\mathfrak{H}}\def\If{\mathfrak{I}}
\def\Jf{\mathfrak{J}}\def\Kf{\mathfrak{K}}\def\Lf{\mathfrak{L}}
\def\Mf{\mathfrak{M}}\def\Nf{\mathfrak{N}}\def\Of{\mathfrak{O}}
\def\Pf{\mathfrak{P}}\def\Qf{\mathfrak{Q}}\def\Rf{\mathfrak{R}}
\def\Sf{\mathfrak{S}}\def\Tf{\mathfrak{T}}\def\Uf{\mathfrak{U}}
\def\Vf{\mathfrak{V}}\def\Wf{\mathfrak{W}}\def\Xf{\mathfrak{X}}
\def\Yf{\mathfrak{Y}}\def\Zf{\mathfrak{Z}}

\maketitle


\begin{abstract}
Graphings are special bounded-degree graphs on probability spaces, representing
limits of graph sequences that are convergent in a local or local-global sense.
We describe a procedure for turning the underlying space into a compact metric
space, where the edge set is closed and nearby points have nearby graph
neighborhoods.
\end{abstract}

\section{Introduction}\label{SEC:PRELIM}

Graphings are bounded-degree graphs on probability spaces, representing limits
of graph sequences that are convergent in a local or local-global sense. For
many aspects of this theory, we refer to \cite{HomBook}.

For dense graphs, a theory that is analogous is many respects (but different in
the details) is the theory of graphons. One of the important tools in graphon
theory is the procedure of ``purification'' of a graphon, which results in a
graphon that is essentially equivalent in all important properties and
parameters, and in addition it is defined on a compact metric space
\cite{LSz3,LSz8}.

The aim of this note is to prove a similar result for graphings: every graphing
can be turned into an equivalent graphing (differing only on a null set) that
is defined on a compact metric space, whose edge set is closed, and if two
points are close in this metric, then large neighborhoods of them (in the graph
distance) are isomorphic and point-by-point close.

This construction has applications, among others, in the theory of
hyperfiniteness; this is published in another paper. Other constructions
compactifying graphings have been given (in a group theory setting) by Elek
\cite{Elek1,Elek2,Elek3}

\section{Graphings}

We fix a positive integer $D$, and all graphs we consider are tacitly assumed
to have maximum degree at most $D$.

A {\it graphing} $\Gb=(I,\AA,E,\lambda)$ is a simple graph (with all degrees
bounded by $D$) on node set $I$, where $(I,\AA)$ is a standard Borel space,
$\lambda$ is a probability measure on $(I,\AA)$, the set of edges $E\subseteq
I\times I$ is a Borel set avoiding the diagonal of $I\times I$ and invariant
under interchanging the coordinates, and the following ``measure-preservation''
condition is satisfied for any two subsets $A,B\in\AA$:
\begin{equation}\label{EQ:UNIMOD}
\int\limits_A \deg_B(x)\,d\lambda(x) =\int\limits_B \deg_A(x)\,d\lambda(x).
\end{equation}
Here $\deg_B(x)$ denotes the number of edges connecting $x\in I$ to points of
$B$. (It can be shown that this is a Borel function of $x$.) Most of the time,
we may assume that $I=[0,1]$ and $\lambda$ is the Lebesgue measure. In other
cases, $I$ will be a complete separable metric space, and $\AA$, the set of
Borel subsets of $I$. We suppress the sigma-algebra $\AA$ in our notation most
of the time.

Such a graphing defines a measure on Borel subsets of $I^2$: on rectangles we
define
\[
\eta(A\times B) = \int\limits_A \deg_B(x)\,d\lambda(x),
\]
which extends to Borel subsets in the standard way. We call this the {\it edge
measure} of the graphing. It is concentrated on the set of edges, and it is
symmetric in the sense that interchanging the two coordinates does not change
it.

Most of the time, we will equip $I$ with a metric $d$ so that $\AA$ becomes the
set of Borel subsets of $(I,d)$. If this is the case, we write
$\Gb=(I,d,E,\lambda)$. Open balls in this metric space will be denoted by
$\Bf(x,\eps)=\Bf_\Gb(x,\eps)$, with radius $0<\eps\le 1$ and center $x\in I$.

We have another metric on $\Gb$, the distance in the graph. We denote the
distance of two points $x,y\in I$ in the graph by $d_\Gb(x,y)$, and define
$d_\Gb(x,y)=\infty$ if $x$ and $y$ belong to different connected components.
Let us define  an {\it $r$-ball} as a rooted graph with degrees at most $D$ and
radius (maximum graph distance from the root) at most $r$. In a graphing $\Gb$,
let $B(x,r)=B_\Gb(x,r)$ denote the subgraph induced by nodes at graph distance
at most $r$ from $x$.

We say that a graphing $\Gb=(I,\AA,E,\lambda)$ is a {\it full subgraphing} of a
graphing $\Gb'=(I',\AA',E',\lambda')$, if $\Gb$ is the union of connected
components of $\Gb'$, $I$ is a Borel subset of $I'$, $\AA=\AA'|_I$, and
$\lambda'(X)=\lambda(I\cap X)$ for every Borel subset $X\subseteq I'$. In
particular, $\lambda'(I)=\lambda(I)=1$, so $\lambda'(I'\setminus I)=0$.

For a graphing $\Gb$, let us pick a random element $x\in I$ according to
$\lambda$, and consider the connected component $\Gb_x$ of $\Gb$ containing
$x$. Since all degrees are finite (bonded by $D$), the graph $\Gb_x$ is finite
or countably infinite. This gives us a probability distribution $\rho_{\Gb_1}$
on rooted connected graphs with degrees bounded by $D$. (The underlying
sigma-algebra is generated by the sets obtained by fixing a finite neighborhood
of the root.) Such distributions, occurring under many names (unimodular,
involution invariant) were introduced in the path-breaking work of Benjamini
and Schramm \cite{BS2} defining convergence of bounded-degree graph sequences.

Two graphings $\Gb_1$ and $\Gb_2$ are called {\it locally equivalent}, if
$\rho_{\Gb_1}=\rho_{\Gb_2}$. A stronger notion of equivalence of graphings,
called {\it local-global equivalence}, was defined in \cite{HLSz} (we don't
define this somewhat more complicated notion here). A full subgraphing is
equivalent to the original graphing both in the local and local-global sense.

\section{Compactifying graphings}

Let $\Gb=(I,E,\lambda)$ be a graphing. For two points $x,y\in I$ and integer
$r\ge 0$, we define an {\it $r$-neighborhood isomorphism} between $x$ and $y$
as an isomorphism $\phi:~B(x,r)\to B(y,r)$ such that $\phi(x)=y$. Such an
isomorphism always exists for $r=0$. For an $r$-neighborhood isomorphism $\phi$
between $x$ and $y$, we define
\begin{equation}\label{EQ:DIST}
d(\phi) = \max_{z\in B(x,r)} d(z,\phi(z)).
\end{equation}

We say that a graphing $\Gb=(I,d,E,\lambda)$ is {\it compact}, if the following
conditions hold:

\smallskip

(C1) $(I,d)$ is a compact metric space;

\smallskip

(C2) $E$ is a closed subset of $I\times I$;

\smallskip

(C3) for every real $\eps>0$ and integer $r\ge 0$ there is a $\delta>0$ such
that for every pair $x,y\in I$ with $d(x,y)\le\delta$ there exists an
$r$-neighborhood isomorphism $\phi$ between $x$ and $y$ with $d(\phi)\le \eps$.

The main result in this note is the following theorem.

\begin{theorem}\label{THM:COMP}
Every graphing is a full subgraphing of a compact graphing.
\end{theorem}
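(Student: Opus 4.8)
The plan is to build the compact space by "remembering" not just each point but its entire local structure at every scale, in a way that is continuous. Concretely, I would first represent the rooted neighborhoods $B(x,r)$ in a canonical way. Since degrees are bounded by $D$, the set $\Uf_r$ of isomorphism types of $r$-balls is finite, so the map $x\mapsto (\text{iso type of } B(x,r))_{r\ge 0}$ already has a compact (indeed totally disconnected) target. But this map is far from injective and, more importantly, it forgets the original metric, so it cannot distinguish the points we need to distinguish. The remedy is to encode, along with the combinatorial type of $B(x,r)$, the original $\lambda$-measure of small balls around each vertex of $B(x,r)$. So I would fix a starting metric (we may assume $I=[0,1]$ with Lebesgue measure, or a complete separable metric $d_0$), and for each $x$ record the data
\[
\Phi(x) = \Bigl( B(x,r),\ \bigl(d_0(x, z)\bigr)_{z\in B(x,r)},\ \lambda\bigl(\Bf_0(z,q)\bigr)_{z\in B(x,r),\, q\in\Qbb\cap(0,1]} \Bigr)_{r\ge 0}.
\]
Even this is not quite enough by itself; the real construction should be iterative, because to get condition (C3) we need nearby points to have isomorphic neighborhoods, and the neighborhood of a neighbor should itself carry the refined information. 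So the cleaner approach is to define a sequence of pseudometrics $d_0\le d_1\le d_2\le\cdots$ where $d_{k+1}(x,y)$ is small iff $d_k(x,y)$ is small \emph{and} there is an $r$-neighborhood isomorphism $\phi$ between $x$ and $y$ (for $r$ growing with $k$) that is $d_k$-small, i.e. $d_k(z,\phi(z))$ small for all $z\in B(x,r)$; then take a suitable weighted sum $d=\sum_k 2^{-k} (d_k\wedge 1)$ (or a limit), and finally pass to the completion/quotient. This is the "purification" analogue: points that have, at every scale, isomorphic and close neighborhoods get identified, and the quotient metric space is what we complete.

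Carrying this out, the steps in order are: (i) Set up the canonical coding and the pseudometrics $d_k$, and check each $d_k$ is Borel-measurable (so that the edge set and the measure push forward to a standard Borel structure); the measure-theoretic bookkeeping here uses that the $r$-neighborhood of $x$ is a measurable function of $x$ in the appropriate sense, which is implicit in the graphing axioms. (ii) Push the graphing forward along $x\mapsto \wh x := $ (equivalence class of $x$), obtaining a graphing $\Gb'$ on the image $\wh I \subseteq \wh X$, where $\wh X$ is the completion; here one checks edges push forward to edges, the measure-preservation equation \eqref{EQ:UNIMOD} is inherited (it only involves $\deg_B$ and $\lambda$, both of which descend), and — crucially — the map is injective modulo a null set so that $\Gb$ is a full subgraphing of $\Gb'$ (we may need to throw away a null set of "bad" points where the coding degenerates, e.g. points whose component has atypical structure, and then $\Gb$ restricted to the good set is the required full subgraphing, after possibly adding back the lost components as isolated pieces — here the "full subgraphing" flexibility, i.e. allowing $\lambda'(I'\setminus I)=0$, is exactly what absorbs the discrepancy). (iii) Verify (C1): $\wh X$ is complete by construction, and totally bounded because at each scale there are only finitely many combinatorial types and the measure-coordinates live in $[0,1]$ which is totally bounded — so a standard diagonal/$\eps$-net argument gives compactness of $\overline{\wh I}$, and one takes $I' = \overline{\wh I}$ together with the limiting graph structure. (iv) Verify (C2): if $(x_n,y_n)\to(x,y)$ with $x_ny_n\in E$, then for each $r$ the pair $x,y$ inherits the relation "$y$ is a neighbor of $x$" from the coding, because $d(x_n,x)\to 0$ forces their $1$-balls to be isomorphic-and-close to those of $x$; pushing to the limit keeps the edge, using that edges are recorded in the $r=1$ layer of the code. (v) Verify (C3): this is essentially built in — $d(x,y)\le\delta$ with $\delta$ small forces $d_k(x,y)$ small for all $k$ up to some $K=K(\delta)\to\infty$, and by definition of $d_k$ that yields an $r$-neighborhood isomorphism (with $r$ as large as we like by taking $\delta$ small) whose displacement $d(\phi)$ is controlled; one has to chase the constants, tying $r$ to $k$ and $\eps$ to the weights $2^{-k}$.

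The main obstacle, I expect, is step (i)–(ii): making the pseudometrics $d_k$ simultaneously (a) Borel, (b) genuinely refining (so that the limit space has enough points to satisfy (C3) but not so many that it fails to be compact), and (c) injective off a null set. The tension is that to get (C3) we want to identify points with isomorphic-and-close neighborhoods at all scales, but such identification could in principle collapse positive-measure sets or, conversely, the "isomorphism exists and is close" relation is not obviously transitive, so it may not be an equivalence relation on the nose — one likely has to symmetrize/transitivize carefully, or define $d_k$ via an infimum over chains of local isomorphisms, and then argue that the resulting $d_k$ still separates enough points. A secondary subtlety is that an $r$-neighborhood isomorphism $\phi$ need not be unique, so $d(\phi)$ depends on the choice; the fix is to always take the minimum of $d(\phi)$ over all $r$-neighborhood isomorphisms between $x$ and $y$, which is the quantity that naturally appears in both the definition of $d_k$ and in (C3), and to check this minimum behaves well under the limit (lower semicontinuity suffices). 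Once the coding is set up correctly, compactness, closedness of $E$, and (C3) should all follow from the finiteness of $\Uf_r$ and routine compactness arguments.
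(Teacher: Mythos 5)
Your overall strategy---define a refined metric in which two points are close when they admit a large-radius neighborhood isomorphism with small displacement, then complete and take the closure of the edge set---is the same as the paper's, but the points you flag as ``main obstacles'' are genuine gaps in your plan, and one issue you do not flag at all is the most serious. First, you never arrange for adjacent points to be far apart in the base metric. The paper's opening step is to use the Kechris--Solecki--Todorcevic Borel coloring theorem to build a compact metric $d_0$ generating the given Borel sets in which every edge has length $1$; since any $r$-neighborhood isomorphism $\phi$ between adjacent $x,y$ satisfies $d_0(\phi)\ge d_0(x,\phi(x))=1$, every edge then also has length $1$ in the refined metric. Without this, the closure of $E$ in the completion can meet the diagonal and a limit point can acquire more than $D$ neighbors, so (C2) and the degree bound both fail. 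Second, because the refined metric is defined in one shot as $d(x,y)=\inf\max\{1/(r+1),d_0(\phi)\}$ and hence satisfies $d\ge d_0$, it is already a genuine metric: no quotient is taken, so the transitivity/equivalence-relation and ``collapsing positive measure'' issues you worry about simply do not arise. Your proposed quotient (and discarding of a null set of ``bad'' points) also threatens the literal statement, which requires the original graphing, unmodified, to be a union of components of the compactification; the paper instead proves that $I$ is a Borel subset of the completion (via Lusin's theorem, using that the relevant projections have finite fibers) and that no edge of the completed graphing leaves $I$.

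Third, your iterated pseudometrics $d_k$ (requiring the isomorphism to be small in $d_{k-1}$ rather than in $d_0$) are unnecessary, and the ``constant chasing'' you defer in step (v) is exactly where the one idea you are missing lives: to verify (C3) for the new metric, take the optimal $R$-neighborhood isomorphism $\psi$ with $d_0(\psi)\le d(x,y)$ for $R=\lceil 1/d(x,y)\rceil-1$, and observe that for each $z\in B(x,r)$ the restriction of $\psi$ to $B(z,R-r)$ is itself an $(R-r)$-neighborhood isomorphism between $z$ and $\psi(z)$, whence $d(z,\psi(z))\le\max\{1/(R-r+1),d_0(\psi)\}\le\eps$ once $d(x,y)\le\eps/(1+r\eps)$. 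This self-improvement shows that a single application of the construction already controls displacement in the \emph{new} metric, so no iteration or limit of pseudometrics is needed. The measure data $\lambda(\Bf_0(z,q))$ in your coding plays no role and can be dropped; compactness of the completion follows, as you say, from the finiteness of the set of $r$-ball types together with compactness of $(I,d_0)$.
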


The compact graphing constructed below will be called a {\it compactification}
of the original graphing. It is easy to see (from the proof or by direct
manipulation) that one could make further assumptions like that the diameter of
$(I,d)$ is $1$ and $d(x,y)=1$ for every edge $xy$. Also, the dependence of
$\delta$ on $\eps$ and $r$ in (C3) is mild: $\delta=\eps/(1+r\eps)$ will
suffice.

\begin{proof}
Let us start with an appropriate compact metric on $I$:

\begin{claim}\label{CLAIM:COMP-EDGE}
There is a metric $d_0$ on $I$ such that $d_0$ defines the given Borel sets,
$(I,d_0)$ is compact, $d_0$ has diameter $1$, and if $xy\in E$, then
$d_0(x,y)=1$.
\end{claim}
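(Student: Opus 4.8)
The plan is to build $d_0$ from a good starting metric and then truncate and perturb it so that edges get pushed to distance exactly $1$. Since $(I,\AA)$ is a standard Borel space, I may assume $I$ is a Polish space and $\AA$ is its Borel $\sigma$-algebra; moreover every standard Borel space is Borel-isomorphic to a Borel subset of a compact metric space (e.g.\ the Hilbert cube $[0,1]^{\mathbb N}$). So first I would fix a Borel isomorphism of $I$ onto a Borel subset of $[0,1]^{\mathbb N}$ and transport a compact metric along it: concretely, take the closure $\overline{I}$ of the image inside the Hilbert cube, equip it with (a rescaling of) the product metric of diameter $1$, and note that $I$ sits inside $\overline{I}$ as a Borel set; the key point here is only that we get \emph{some} metric $d_1$ on a compact superset with the right Borel structure on $I$. (Alternatively one can absorb the ``full subgraphing'' language and simply work on a compact $I$ from the start, but keeping $d_1$ abstract is cleaner.)

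Next I would modify $d_1$ so that edges are far apart. The obstacle is that $d_1$ may make the endpoints of some edges arbitrarily close, and $E$ may be an infinite (though Borel, diagonal-avoiding) set, so I cannot just ``separate finitely many pairs.'' The fix is a standard graph-theoretic one: because all degrees are at most $D$, the graph underlying $\Gb$ has chromatic number at most $D+1$, and one can find a Borel proper colouring $c:I\to\{0,1,\dots,D\}$ into $D+1$ Borel classes $I_0,\dots,I_D$ (this is a known measurable-combinatorics fact for bounded-degree Borel graphs; it also follows from a Borel greedy argument along a Borel ordering). Then no edge has both endpoints in the same class. Define
\[
d_0(x,y)=\begin{cases}1,& c(x)\neq c(y),\\[1mm] \tfrac12\,d_1(x,y),& c(x)=c(y).\end{cases}
\]

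Finally I would check the required properties. Symmetry and $d_0(x,y)=0\iff x=y$ are immediate. For the triangle inequality, the only nontrivial case is $c(x)=c(z)$ but $c(y)$ different from both: then $d_0(x,z)=\tfrac12 d_1(x,z)\le\tfrac12<1=d_0(x,y)=d_0(y,z)$, and the remaining cases are routine since $d_0\le 1$ always and $d_0\ge\tfrac12 d_1$ within a class while the cross terms are $1\ge\tfrac12 d_1$. The diameter is $1$ (any two points in different classes are at distance $1$, and classes are nonempty for at least two colours unless the graph is edgeless, in which case we may instead just rescale $d_1$ to diameter $1$). The metric $d_0$ defines the same Borel sets as $d_1$: within each class $I_k$ it is a constant multiple of $d_1|_{I_k}$, the classes are Borel, and points in distinct classes are isolated from each other at scale $1$, so a set is $d_0$-Borel iff its intersection with each $I_k$ is $d_1$-Borel, i.e.\ iff it is $d_1$-Borel. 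Compactness: $(I,d_1)$ (or its completion) is compact, the finitely many classes are closed in each other for $d_0$, and on each class $d_0$ and $d_1$ induce the same topology, so $(I,d_0)$ is a finite disjoint union (at scale $\tfrac12$) of compact pieces, hence compact. And by construction $xy\in E$ forces $c(x)\neq c(y)$, hence $d_0(x,y)=1$. The main obstacle, as indicated, is producing the Borel $(D+1)$-colouring; everything after that is a short verification.
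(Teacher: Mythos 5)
Your overall architecture is exactly the paper's: a Borel proper colouring of $I$ into $D+1$ classes (this is the Kechris--Solecki--Todorcevic theorem, which is what the paper cites), distance $1$ between distinct classes so that every edge automatically has length $1$, and a compact diameter-$1$ metric inside each class. The triangle-inequality and Borel-structure verifications you give are fine. But there is a genuine gap in the compactness step, and it is precisely the point where your choice of $d_1$ diverges from the paper's.

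You obtain $d_1$ by a Borel isomorphism of $I$ onto a \emph{Borel subset} of the Hilbert cube and then restrict the ambient product metric. A Borel subset of a compact metric space is in general not compact in the induced metric (think of the irrationals in $[0,1]$), so $(I,d_1)$, and hence each class $(I_k,\tfrac12 d_1)$, need not be compact; your parenthetical ``or its completion'' does not help, because the completion is a strictly larger set than $I$, whereas the claim demands a compact metric on $I$ itself. The repair is what the paper does: invoke the Borel isomorphism theorem to get a Borel \emph{bijection} of each (uncountable) colour class onto the whole interval $[0,1]$ and pull the Euclidean metric back along that bijection. The class then becomes isometric to $[0,1]$, hence genuinely compact, and since a Borel isomorphism carries Borel sets to Borel sets in both directions, the pulled-back metric still induces exactly $\AA|_{I_k}$ --- the claim only asks for the right Borel $\sigma$-algebra, not for any compatibility with a pre-existing topology on $I$, and this extra freedom is exactly what makes compactness achievable. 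With that substitution (plus the routine handling of countable classes) your argument goes through.
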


By a theorem of Kechris, Solecki and Todorcevic \cite{KST}, the graphing has a
Borel coloring $\{B_0,\dots,B_D\}$ with $D+1$ colors. On every color class,
define a compact metric with diameter $1$ giving the prescribed Borel sets (for
example, using an isomorphism between $(B_0,\AA|_{B_0})$ and the Borel
sigma-algebra of the interval $[0,1]$ with the euclidean metric). Define the
distance of two points in different color classes to be $1$. This metric $d_0$
satisfies the requirements of the claim.

\smallskip

Next, we define a new metric on $I$ by
\begin{equation}\label{EQ:DIST2}
d(x,y)=\inf\max\Bigl\{\frac1{r+1},d_0(\phi)\Bigr\},
\end{equation}
where the infimum is taken over all $r\ge 0$ and over all $r$-neighborhood
isomorphisms $\phi$ between $x$ and $y$. Note that the infimum is attained for
$r_0=\lceil 1/d(x,y)\rceil-1$: for smaller values of $r$, $1/(r+1)>d(x,y)$, and
for larger values of $r$, we can restrict the optimal $r$-neighborhood
isomorphism to $B(x,r_0)$.

It is easy to see that $d$ is a metric, $d_0(x,y)\le d(x,y)\le 1$, and
$d(x,y)=1$ for every edge $xy\in E$. Besides the balls $B(x,r)$ in the graph
metric, we have to distinguish the balls $\Bf_0(x,\eps)$ in the space $(I,d_0)$
and the balls $\Bf(x,\eps)$ in the space $(J,d)$.

Let $(J,d)$ be the completion of the metric space $(I,d)$.

\begin{claim}\label{CLAIM:COMP1}
$(J,d)$ is a compact metric space.
\end{claim}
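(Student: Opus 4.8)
The plan is to show that $(I,d)$ is totally bounded; since $(J,d)$ is its completion it is then complete and totally bounded, hence compact. So fix $\eps>0$, choose an integer $r$ with $1/(r+1)\le\eps$, and aim to produce a finite set $S\subseteq I$ such that every $x\in I$ satisfies $d(x,x^*)\le\eps$ for some $x^*\in S$. By \eqref{EQ:DIST2} it is enough, for each $x$, to exhibit an $r$-neighborhood isomorphism $\phi$ between $x$ and some $x^*\in S$ with $d_0(\phi)\le\eps$, since then $d(x,x^*)\le\max\{1/(r+1),d_0(\phi)\}\le\eps$.

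Two finiteness facts drive the argument. First, by Claim~\ref{CLAIM:COMP-EDGE} the space $(I,d_0)$ is compact, hence totally bounded, so it has a finite $\eps/2$-net $N$. Second, the number of isomorphism types of $r$-balls (rooted graphs of radius at most $r$ with all degrees at most $D$) is finite, since each such graph has at most $M:=1+D+\dots+D^r$ vertices; call these types $T_1,\dots,T_k$, and write the vertex set of $T_j$ as $\{u_1^{(j)},\dots,u_{m_j}^{(j)}\}$ with $u_1^{(j)}$ the root. For each $x\in I$ the ball $B(x,r)$ is isomorphic, as a rooted graph, to exactly one $T_j$; fix an isomorphism $\psi_x\colon T_j\to B(x,r)$ with $\psi_x(u_1^{(j)})=x$, and associate with $x$ the tuple $t_x=(\psi_x(u_1^{(j)}),\dots,\psi_x(u_{m_j}^{(j)}))\in I^{m_j}$. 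The product $I^{m_j}$ is covered by the finitely many balls of $d_0$-radius $\eps/2$ (in each coordinate) centered at points of $N^{m_j}$; for each $j$ and each such ball that contains $t_x$ for at least one $x\in I$, select one representative $x^*$, and let $S$ be the finite collection of all these representatives over all $j$.

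It remains to verify that $S$ is an $\eps$-net in $(I,d)$. Given $x\in I$ of ball-type $T_j$, the tuple $t_x$ lies in one of the chosen balls, whose representative $x^*$ then also has ball-type $T_j$; since $t_x$ and $t_{x^*}$ lie within $\eps/2$ of a common point of $N^{m_j}$, we get $d_0\bigl(\psi_x(u_i^{(j)}),\psi_{x^*}(u_i^{(j)})\bigr)\le\eps$ for every $i$. Now $\phi:=\psi_{x^*}\circ\psi_x^{-1}\colon B(x,r)\to B(x^*,r)$ is an isomorphism of rooted graphs with $\phi(x)=x^*$, i.e.\ an $r$-neighborhood isomorphism between $x$ and $x^*$, and for $z=\psi_x(u_i^{(j)})$ we have $d_0(z,\phi(z))=d_0\bigl(\psi_x(u_i^{(j)}),\psi_{x^*}(u_i^{(j)})\bigr)\le\eps$, so $d_0(\phi)\le\eps$, as needed.

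The step needing the most care is the middle one: turning "the $r$-balls of $x$ and $x^*$ have coordinatewise $d_0$-close embeddings" into an actual $r$-neighborhood isomorphism with small $d_0(\phi)$. This forces one to commit to a reference isomorphism $\psi_x$ from the abstract type for every point, and then to check that $\psi_{x^*}\circ\psi_x^{-1}$ is genuinely a rooted graph isomorphism and that coordinatewise closeness of $t_x$ and $t_{x^*}$ is precisely the statement $d_0(\phi)\le\eps$. Finiteness of $S$ then rests on both ingredients above — the finite $d_0$-net $N$ and the finitely many $r$-ball types — together with the uniform bound $M$ on the number of vertices of an $r$-ball, which keeps the exponents $m_j$ bounded.
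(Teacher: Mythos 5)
Your proof is correct, and it reaches the conclusion by the dual route to the paper's. The paper proves that every sequence in $(I,d)$ has a Cauchy subsequence: it passes to a subsequence along which the $r$-balls are pairwise isomorphic, composes the resulting neighborhood isomorphisms $\phi_{r,n}$, and then uses compactness of $(I,d_0)$ to make the image points $\phi_{r,n}(z)$ converge. You instead prove total boundedness of $(I,d)$ outright by exhibiting a finite $\eps$-net. Both arguments rest on exactly the same two finiteness facts --- compactness (hence total boundedness) of $(I,d_0)$ from Claim~\ref{CLAIM:COMP-EDGE}, and the finiteness of the set of isomorphism types of $r$-balls with degrees at most $D$ --- so the difference is one of packaging rather than substance. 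Your version is quantitative (it bounds the size of an $\eps$-net of $(I,d)$ by roughly $k\,|N|^{M}$) and avoids the double subsequence extraction and the bookkeeping of the composed maps; the step you rightly single out as delicate, namely that coordinatewise $d_0$-closeness of the labelled tuples $t_x$ and $t_{x^*}$ for two points of the same ball type is literally the statement that $\psi_{x^*}\circ\psi_x^{-1}$ is an $r$-neighborhood isomorphism with $d_0$-displacement at most $\eps$, is handled correctly. The only item left implicit is the standard fact that the completion of a totally bounded space is totally bounded (an $\eps$-net for the dense subset $I$ is a $2\eps$-net for $J$), which together with completeness yields compactness of $(J,d)$.
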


It suffices to prove that every infinite sequence of points $(x_1,x_2,\dots)$
in $I$ has a subsequence that is Cauchy in the metric $d$. Since there are only
a bounded number of possible neighborhoods of any radius, we can select a
subsequence such that $B(x_n,r)$ is isomorphic to $B(x_r,r)$ for $0\le r\le n$.
We fix neighborhood isomorphisms $\phi_r:~B(x_r,r)\to B(x_{r+1},r)$, and let
$\phi_{r,n}=\phi_{n-1}\circ\dots\circ\phi_r$. Selecting a subsequence again, we
may assume that for every $r\ge 0$ and every $z\in B(x_r,r)$, the sequence
$(\phi_{r,n}(z):~n=r,r+1,\dots)$ is convergent in the $d_0$ metric.

Let $\eps>0$ and $r=\lceil 1/\eps\rceil-1$. Since $B(x_r,r)$ is finite, there
is an $N=N(\eps)$ such that for $n>m>N$ we have
$d_0(\phi_{r,n}(z),\phi_{r,m}(z))<\eps$ for every $z\in B(x_r,r)$. Let $\phi$
denote the restriction of $\phi_{n-1}\circ\dots\circ\phi_m$ to
$\phi_{r,m}(B(x_r,r))$. Then $\phi$ is an $r$-neighborhood isomorphism between
$x_m$ and $x_n$, and
\begin{align*}
d_0(\phi)&=\max_{z\in B(x_m,r)} d_0(z,\phi(z)) \le \eps.
\end{align*}
This shows that $d(x_n,x_m)<\eps$. So $(x_1,x_2,\dots)$ is a Cauchy sequence,
which proves the claim.

\begin{claim}\label{CLAIM:COMP3}
The Borel sets in $(I,d_0)$ are exactly the restrictions of Borel sets in
$(J,d)$ to $I$.
\end{claim}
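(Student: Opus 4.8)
The plan is to show that the metrics $d_0$ and $d$ generate the same Borel $\sigma$-algebra on $I$, and then to combine this with the standard fact that for a subspace $I$ of a metric space $(J,d)$ one has $\BB(I,d|_I)=\{B\cap I:\ B\in\BB(J,d)\}$. One inclusion is immediate: since $d_0\le d$, the identity map $(I,d)\to(I,d_0)$ is $1$-Lipschitz, hence continuous, so every $d_0$-open, and therefore every $d_0$-Borel, subset of $I$ is $d|_I$-Borel; via the subspace fact this gives $\BB(I,d_0)\subseteq\{B\cap I:\ B\in\BB(J,d)\}$.

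For the reverse inclusion the key point is that $d$ is a Borel function on $(I\times I,\ d_0\times d_0)$. Fix $x\in I$ and $\eps>0$; I claim $\{y\in I:\ d(x,y)<\eps\}$ is $d_0$-Borel. By \eqref{EQ:DIST2}, $d(x,y)<\eps$ iff there is some $r$ with $1/(r+1)<\eps$ and some $r$-neighborhood isomorphism $\phi$ between $x$ and $y$ with $d_0(\phi)<\eps$, so it suffices to show that for each fixed $r$ the set $U_r=\{(x,y):\ \exists\ r\text{-neighborhood isomorphism }\phi\text{ with }d_0(\phi)<\eps\}$ is Borel in $(I\times I,\ d_0\times d_0)$. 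There are only finitely many isomorphism types of rooted graphs of radius $\le r$ and maximum degree $\le D$. Since $(I,d_0)$ is a standard Borel space, fix a Borel injection $\iota_0\colon I\to[0,1]$; for each $x$ the vertices of $B(x,r)$ are distinct points of $I$, hence carry distinct $\iota_0$-values, so we may order them canonically (root first, then by graph distance from the root, breaking ties within a distance class by $\iota_0$-value), obtaining Borel partial maps $x\mapsto x^{(j)}$ defined on the Borel set where $B(x,r)$ has a prescribed isomorphism type; here one uses that $E$ is Borel and that degree-type functions such as $\deg_{\{z:\ \iota_0(z)\le t\}}(x)$ are jointly Borel. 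An $r$-neighborhood isomorphism $\phi$ between $x$ and $y$ then corresponds to one of finitely many root-fixing permutations $\sigma$ of the vertex labels for which $x^{(i)}\mapsto y^{(\sigma(i))}$ is a graph isomorphism, and $d_0(\phi)=\max_i d_0\bigl(x^{(i)},y^{(\sigma(i))}\bigr)$. Hence $U_r$ is a finite union, over isomorphism types and over such permutations $\sigma$, of sets of the form $\{(x,y):\ x^{(i)}\mapsto y^{(\sigma(i))}\text{ is an isomorphism and }\max_i d_0(x^{(i)},y^{(\sigma(i))})<\eps\}$, each manifestly Borel. Thus $U_r$, and therefore $\{y:\ d(x,y)<\eps\}$, is $d_0$-Borel.

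Now the compactness enters. By Claim \ref{CLAIM:COMP1}, $(J,d)$ is compact, hence separable, hence so is its subspace $(I,d)$; let $Q\subseteq I$ be countable and $d$-dense. Then $\{\Bf(q,s)\cap I:\ q\in Q,\ s\in\Qbb,\ s>0\}$ is a countable base of the topology of $(I,d|_I)$, and by the previous paragraph each of its members is a $d_0$-Borel subset of $I$. Consequently $\BB(I,d|_I)\subseteq\BB(I,d_0)$. Combining this with the subspace fact and the easy inclusion above yields $\{B\cap I:\ B\in\BB(J,d)\}=\BB(I,d|_I)=\BB(I,d_0)$, which is the claim.

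The main obstacle is the measurability step of the second paragraph: converting ``there exists an $r$-neighborhood isomorphism with $d_0(\phi)<\eps$'' into a visibly Borel condition. Once the vertex sets of $B(x,r)$ and $B(y,r)$ are identified in a Borel way, the remaining existential quantifier ranges over a finite set of vertex maps, so the real content is this Borel identification of bounded-radius neighborhoods. This is a standard feature of graphings: it can be organized via the Borel coloring $\{B_0,\dots,B_D\}$ already produced, or deduced from the Lusin--Novikov uniformization theorem, since the relevant projections have finite fibers by the degree bound. Everything else is routine.
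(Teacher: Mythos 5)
Your proof is correct, and its skeleton matches the paper's: the easy inclusion from $d_0\le d$, the reduction via separability of $(J,d)$ to showing that $d$-balls centered at points of $I$ are $d_0$-Borel, and a descriptive-set-theoretic step exploiting the finiteness of bounded-radius neighborhoods. Where you diverge is in how that last step is executed. The paper fixes $x$, lists $B(x,r)=\{x_1,\dots,x_N\}$, forms the Borel set $S\subseteq I^N$ of tuples realizing an $r$-neighborhood isomorphism with small $d_0(\phi)$, notes that each first coordinate lies in only finitely many such tuples (connectivity plus the degree bound), and projects to the first coordinate by Lusin's theorem. You instead push the uniformization to the front: a canonical Borel enumeration of the vertices of $B(y,r)$ for every $y$, after which the existential quantifier over isomorphisms ranges over finitely many permutations and $U_r$ becomes a finite union of visibly Borel sets. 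The two are essentially interchangeable --- your enumeration is itself justified by Lusin--Novikov applied to the edge relation, so the same theorem is invoked, just one level earlier --- but your version makes the finite-to-one structure explicit at the cost of some bookkeeping (tie-breaking order, case split over isomorphism types), and it has the minor advantage of working directly with strict inequalities, so the sets you produce are exactly the traces of open balls. Neither write-up fully details the Borel enumeration/section argument, which is standard for bounded-degree Borel graphs, so your level of rigor matches the paper's.
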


The fact that $d_0\le d$ implies that if a set is open in $(I,d_0)$, then it is
open in $(I,d)$, and so it is the restriction of an open set of $(J,d)$ to $I$.
Restrictions of Borel sets of $(J,d)$ to $I$ form a sigma-algebra, which
contains all open sets of $(I,d_0)$, and hence, all Borel sets.

The converse is a bit more elaborate. It suffices to show that for every open
set $U$ of $(J,d)$, the intersection $I\cap U$ is a Borel set in $(I,d_0)$ (not
necessarily open!). We may assume $U=\Bf(x,\eps)$ for some $x\in I$ and
$\eps>0$, since $U\cap I$ can be covered by a countable number of such balls
contained in $U$ (using that $(J,d)$ is a compact metric space and hence
separable).

Let $r=\lceil1/\eps\rceil-1$, and $B(x,r)=\{x=x_1,x_2,\dots,x_N\}$. Consider
the set $S$ of those points $(y_1,\dots,y_N)\in I^N$ for which
$\phi:~x_i\mapsto y_i$ is an $r$-neighborhood isomorphism between $x_1$ and
$y_1$ with $d_0(\phi)\le\eps$. It is clear that $S$ is a Borel set in
$(I,d_0)^N$. If $(y_1,\dots,y_N)\in S$, then the points $y_1,\dots,y_N$ induce
a connected subgraph of $\Gb$, and so every point $y_1$ is contained in a
finite number of such $N$-tuples. Thus Lusin's Theorem implies that the
projection of $S$ to the first coordinate, which is just the set $I\cap U$, is
Borel.

\begin{claim}\label{CLAIM:COMP4}
The set $I$ is Borel in $(J,d)$.
\end{claim}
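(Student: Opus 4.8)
The plan is to deduce this from Claim~\ref{CLAIM:COMP3} together with a classical fact from descriptive set theory. First I would observe that Claim~\ref{CLAIM:COMP3} says precisely that the inclusion map $\iota\colon(I,d_0)\to(J,d)$ is Borel measurable: for every Borel set $B\subseteq J$, its preimage $\iota^{-1}(B)=I\cap B$ is Borel in $(I,d_0)$. (It is worth noting that $\iota$ need not be continuous: since $d_0\le d$, the identity is continuous only in the direction $(I,d)\to(I,d_0)$, which is why we must appeal to Claim~\ref{CLAIM:COMP3} rather than to a soft topological argument.) The map $\iota$ is trivially injective.

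Next I would record that $(I,d_0)$ is a standard Borel space: it is a compact metric space by Claim~\ref{CLAIM:COMP-EDGE}, hence Polish, hence standard Borel; likewise $(J,d)$ is Polish (indeed compact) by Claim~\ref{CLAIM:COMP1}, hence standard Borel. The Lusin--Souslin theorem asserts that the image of a standard Borel space under an injective Borel map into another standard Borel space is Borel (and the map is a Borel isomorphism onto its image). Applying this to $\iota$ shows that $I=\iota(I)$ is a Borel subset of $(J,d)$, proving the claim.

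The only real content here is the identification of Claim~\ref{CLAIM:COMP3} with Borel measurability of $\iota$ and the invocation of the Lusin--Souslin theorem; the remaining verifications (injectivity, standardness of the Borel spaces) are immediate, so I do not expect a genuine obstacle. What should be emphasized is that Lusin--Souslin cannot be replaced by elementary point-set topology: because $\iota$ is generally discontinuous, $I$ need not be closed, of low Borel complexity, or $\sigma$-compact in $(J,d)$ — in general $I$ is a proper dense Borel subset of $J$, and the extra points $J\setminus I$ are exactly what will later be shown to form a null set.
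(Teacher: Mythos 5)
Your proof is correct and follows essentially the same route as the paper: both deduce from Claim~\ref{CLAIM:COMP3} that the inclusion $(I,d_0)\hookrightarrow(J,d)$ is an injective Borel map between standard Borel spaces and then invoke the Lusin--Souslin theorem (the paper phrases this as projecting the Borel graph of the embedding and citing Lusin's theorem, which is the same result). Your remarks on why a purely topological argument cannot work are accurate but not needed for the proof.
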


Indeed, an immediate consequence of Claim \ref{CLAIM:COMP3} is that the
embedding $f:~(I,d_0) \hookrightarrow (J,d)$ is Borel measurable. Claim
\ref{CLAIM:COMP1} implies that $(J,d)$ defines a standard Borel space, and by
hypothesis, so does $(I,d_0)$. It follows that the graph of $f$ is Borel in
$(I,d_0)\times(J,d)$, and so the set $I$, which is its projection to $J$, is a
Borel set in $(J,d)$ by Lusin's Theorem.

We define a graphing $\wh\Gb$ on $J$. The underlying probability measure
$\wh{\lambda}$ is defined on the Borel subsets of $(J,d)$ by
\[
\wh{\lambda}(X) = \lambda(X\cap I).
\]
We define the edge set $\wh{E}$ as the closure of $E$ in $(J,d)^2$. This
guarantees that $\wh{E}$ is a symmetric Borel set in $(J,d)^2$. Furthermore,
$\wh{E}$ does not meet the diagonal of $J\times J$; in fact every edge has
length $1$ in the metric $d$.

Next we show that $\Gb$ is the union of connected components of $\wh{\Gb}$
contained in $I$.

\begin{claim}\label{CLAIM:COMP2}
If $xy\in\wh{E}$ and $x\in I$, then $y\in I$ and $xy\in E$.
\end{claim}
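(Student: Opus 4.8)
The plan is to exploit that $\wh E$ is by definition the closure of $E$ in $(J,d)^2$. Suppose $xy\in\wh E$ with $x\in I$. Since $(J,d)^2$ is a metric space, there is a sequence of edges $a_nb_n\in E$ with $d(a_n,x)\to 0$ and $d(b_n,y)\to 0$; note $a_n,b_n\in I$ because $E\subseteq I\times I$. The idea is to pull the edges $a_nb_n$ back to $x$ along neighborhood isomorphisms and thereby read off what $y$ must be.

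First I would unpack $d(a_n,x)\to 0$ via \eqref{EQ:DIST2}: for all sufficiently large $n$ there is an integer $r_n\ge 1$ and an $r_n$-neighborhood isomorphism $\psi_n$ between $a_n$ and $x$ with $1/(r_n+1)\le d(a_n,x)$ and $d_0(\psi_n)\le d(a_n,x)$; hence $r_n\to\infty$ and $d_0(\psi_n)\to 0$. Since $b_n$ is a neighbor of $a_n$ in $\Gb$, the point $z_n:=\psi_n(b_n)$ is a neighbor of $x$ in $\Gb$, so $z_n\in I$ and $xz_n\in E$. As $x$ has at most $D$ neighbors, after passing to a subsequence I may assume $z_n=z$ for a single fixed neighbor $z$ of $x$, with $z\in I$ and $xz\in E$.

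The key step is then to restrict $\psi_n$ to the ball $B(b_n,r_n-1)$. Because $b_n$ is adjacent to $a_n$ and a shortest path in $\Gb$ between two points of a ball stays inside that ball, this restriction is an $(r_n-1)$-neighborhood isomorphism between $b_n$ and $z$, whose $d_0$-distortion is at most $d_0(\psi_n)$. Plugging this into \eqref{EQ:DIST2} yields $d(b_n,z)\le\max\{1/r_n,\,d_0(\psi_n)\}\to 0$. But along the chosen subsequence $d(b_n,y)\to 0$ as well, so uniqueness of limits in the metric space $(J,d)$ forces $y=z$. Hence $y\in I$ and $xy=xz\in E$, proving the claim.

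I expect the only genuine obstacle to be the bookkeeping in the key step: one must check that restricting an $r_n$-neighborhood isomorphism rooted at $a_n$ to the ball of radius $r_n-1$ about the neighbor $b_n$ really is an $(r_n-1)$-neighborhood isomorphism rooted at $b_n$ — i.e.\ that its image is exactly $B(z,r_n-1)$ and that it respects the root. This reduces to the observation that an isomorphism of $r$-balls preserves graph distance from the root, which holds because geodesics between points of a ball lie within the ball. The rest is routine.
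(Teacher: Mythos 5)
Your proof is correct and follows essentially the same route as the paper: pull the far endpoint $b_n$ back along the neighborhood isomorphisms witnessing $d(a_n,x)\to 0$ to a fixed neighbor $z$ of $x$, bound $d(b_n,z)$ by restricting the isomorphism, and conclude $y=z$ by uniqueness of limits. Your bookkeeping in the restriction step is in fact more careful than the paper's (which simply asserts $d(y_n,y')\le\max\{1/(r_n+1),d_0(\phi_n)\}$); the only nitpick is that the justification should be that geodesics from the \emph{center} of a ball to its points stay inside the ball, not geodesics between arbitrary pairs of points of a ball, which can leave it.
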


By the definition of $\wh{E}$, there are edges $x_ny_n\in E$ such that $x_n\to
x$ and $y_n\to y$ in $(J,d)$. There are $r_n$-neighborhood isomorphisms
$\phi_n$ in $\Gb$ from $x_n$ to $x$ such that $r_n\to\infty$ and
$d_0(\phi_n)\to 0$. This implies that $y_n'=\phi_n(y_n)$ exists for
sufficiently large $n$ and $xy_n'\in E$. Since $x$ has a finite number of
neighbors, we can select an infinite subsequence for which $y_n'=y'$ is
independent of $n$. Then
\[
d(y_n,y')=d(y_n,\phi_n(y_n))\le \max\Bigl\{\frac1{r_n+1},d_0(\phi_n)\Bigr\}\to 0.
\]
But $d(y_n,y)\to 0$ by hypothesis, and hence $y=y'\in I$ and $xy=xy'\in E$.
This proves the claim.

\begin{claim}\label{CLAIM:COMPD}
All degrees in $\wh{\Gb}$ are bounded by $D$.
\end{claim}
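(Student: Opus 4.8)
The plan is to argue by contradiction: I assume some vertex of $\wh\Gb$ has more than $D$ neighbors, and I transport the edges that realize closeness to $x$ back to a genuine vertex of $I$, where the degree bound of $\Gb$ is available. Note first that if $x\in I$, then Claim~\ref{CLAIM:COMP2} already gives that every $\wh E$‑neighbor of $x$ lies in $I$ and is joined to $x$ in $E$, so $\deg_{\wh\Gb}(x)=\deg_\Gb(x)\le D$; but the argument below works uniformly for all $x\in J$, so I will not separate cases.

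Suppose some $x\in J$ has $D+1$ distinct neighbors $y_1,\dots,y_{D+1}$ in $\wh\Gb$, and set $\mu=\min_{i\ne j}d(y_i,y_j)>0$, so the balls $\Bf(y_i,\mu/2)$ are pairwise disjoint in $(J,d)$. I fix an integer $r\ge 1$ large enough that $\frac1r+\frac1{2(r+1)}<\frac\mu2$ and put $\eps'=\frac1{2(r+1)}$. Since $\wh E$ is the closure of $E$ in $(J,d)^2$, for each $i$ I can pick an edge $a_ib_i\in E$ with $d(a_i,x)<\eps'$ and $d(b_i,y_i)<\eps'$; in particular all the $a_i$ and $b_i$ lie in $I$, and $d(a_1,a_i)<2\eps'=\frac1{r+1}$.

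Now I use the definition \eqref{EQ:DIST2} of $d$: since $a_1,a_i\in I$ and $d(a_1,a_i)<\frac1{r+1}$, the infimum in \eqref{EQ:DIST2} is attained at a radius $\ge r$, and restricting the optimal neighborhood isomorphism to $B(a_1,r)$ yields an $r$‑neighborhood isomorphism $\theta_i\colon B(a_1,r)\to B(a_i,r)$ in $\Gb$ with $\theta_i(a_1)=a_i$ and $d_0(\theta_i)<\frac1{r+1}$. Let $z_i=\theta_i^{-1}(b_i)$; since $b_i$ is a neighbor of $a_i$ and $\theta_i$ is a graph isomorphism taking $a_1$ to $a_i$, the point $z_i$ is a neighbor of $a_1$ in $\Gb$, at graph distance $1$. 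Restricting $\theta_i$ to $B(z_i,r-1)$ gives an $(r-1)$‑neighborhood isomorphism between $z_i$ and $b_i$ with $d_0$‑defect at most $d_0(\theta_i)<\frac1{r+1}<\frac1r$, so \eqref{EQ:DIST2} yields $d(z_i,b_i)<\frac1r$, whence $d(z_i,y_i)\le d(z_i,b_i)+d(b_i,y_i)<\frac1r+\eps'<\frac\mu2$, i.e.\ $z_i\in\Bf(y_i,\mu/2)$.

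Since the balls $\Bf(y_i,\mu/2)$ are pairwise disjoint, the points $z_1,\dots,z_{D+1}$ are pairwise distinct; but they are all neighbors of $a_1\in I$ in $\Gb$, contradicting $\deg_\Gb(a_1)\le D$. This proves the claim. The only delicate point is the passage, via \eqref{EQ:DIST2}, from proximity in the metric $d$ to an honest isomorphism of graph balls in $\Gb$ with controlled $d_0$‑displacement, together with the degradation of that displacement bound by one unit of radius when one moves from $a_i$ to its neighbor $b_i$; everything else — disjointness of the small balls around the $y_i$, the triangle inequality, and the final pigeonhole against the degree bound of $\Gb$ — is routine, and the whole scheme hinges on the fact that $a_1$ is a genuine vertex of $I$, where $\Gb$'s degree bound applies.
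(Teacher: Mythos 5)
Your proof is correct and follows essentially the same strategy as the paper's: both transport the $E$-edges approximating the $\wh{E}$-edges at $x$ back onto a single common endpoint lying in $I$, via neighborhood isomorphisms extracted from the definition \eqref{EQ:DIST2} of $d$, and then invoke the degree bound of $\Gb$ at that genuine vertex. The only differences are cosmetic: you work with one fixed large radius $r$ and explicit quantitative estimates instead of sequences tending to a limit, and you treat $x\in I$ and $x\in J\setminus I$ uniformly rather than dispatching the first case by Claim~\ref{CLAIM:COMP2}.
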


For the degree of $x\in I$ this follows by Claim \ref{CLAIM:COMP2}. Let $x\in
J\setminus I$, and let $xy_1,\dots,xy_r\in \wh{E}$. For every $i$, there are
sequences $(x_{i,n}:~n=1,2,\dots)$ and $(y_{i,n}:~n=1,2,\dots)$ of points in
$(I,d)$ such that $x_{i,n}\to x$, $y_{i,n}\to y_i$, and $x_{i,n}y_{i,n}\in E$.
We may assume that $d(x_{i,n},x)\le 1/2^n$ and $d(y_{i,n},y_i)\le 1/2^n$.

We may also assume that $x_{1,n}=\dots=x_{r,n}$ for all $r$. Indeed, we have
$d(x_{1,n},x_{i,n})\le d(x_{1,n},x)+d(x_{i,n},x)\le 1/2^{n-1}$, and hence (for
$n\ge 2$) we have a $1$-neighborhood isomorphism $\phi$ between $x_{i,n}$ and
$x_{1,n}$ such that $d(\phi)\le 1/2^{n-1}$. The point $y'_{i,n}=\phi(x_{i,n})$
is a neighbor of $x_{1,n}$. Furthermore, $d(y_{i,n},y'_{i,n}))\le 1/2^{n-1}$,
and hence $d(y_i,y'_{i,n}))\le 1/2^{n-2}$. So $y'_{i,n}\to y_i$, and we can
replace the edge $x_{i,n}y_{i,n}$ by $x_{1,n}y'_{i,n}$.

Now if the points $y_1,\dots,y_r$ are different, then the points
$y'_{1,n},\dots,y'_{r,n}$ are different for sufficiently large $n$. But these
points are neighbors of $x_{1,n}$ in $\Gb$, and hence $r\le D$, proving the
Claim.

It is easy to check, using that $\wh{\lambda}(J\setminus I)=0$, that
$\wh{\lambda}$ satisfies the identity \eqref{EQ:UNIMOD}, and so $\wh\Gb$ is a
graphing. We show that it is a compact graphing. We have seen that the
underlying space $J$ is a compact metric space and the edge set $E$ is a closed
subset of $J\times J$ by its definition. To conclude, it suffices to prove that
the third condition in the definition of compactness holds.

\begin{claim}\label{CLAIM:COMP7}
For every real number $\eps>0$, integer $r\ge 0$ and $x,y\in J$ with
$d(x,y)\le\eps/(1+r\eps)$, there exists an $r$-neighborhood isomorphism $\phi$
between $x$ and $y$ with $d(\phi)\le \eps$.
\end{claim}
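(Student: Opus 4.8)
\emph{Strategy.} We first settle the case $x,y\in I$, then pass to $J$ by approximation; the bridge is a sub-claim saying that the $r$-ball of a point of $J$ is a metric limit of the $r$-balls of approximating points of $I$.

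\emph{Case $x,y\in I$.} Here $B_{\wh\Gb}(\cdot,s)=B_\Gb(\cdot,s)$ by Claims \ref{CLAIM:COMP2}--\ref{CLAIM:COMPD}. Assume $x\ne y$ and put $\delta=\eps/(1+r\eps)$. For $r=0$ the statement is immediate. For $r\ge 1$, by the remark after \eqref{EQ:DIST2} the infimum defining $d(x,y)$ is attained by some $s$-neighborhood isomorphism $\phi$ with $s+1=\lceil 1/d(x,y)\rceil$, so $d_0(\phi)\le d(x,y)\le\delta\le\eps$ and $s+1\ge 1/d(x,y)\ge 1/\delta=1/\eps+r$; in particular $s\ge r$. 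Restrict $\phi$ to $B_\Gb(x,r)$, an $r$-neighborhood isomorphism between $x$ and $y$. For $z\in B_\Gb(x,r)$, restricting $\phi$ further to $B_\Gb(z,s-r)\subseteq B_\Gb(x,s)$ gives an $(s-r)$-neighborhood isomorphism between $z$ and $\phi(z)$ of $d_0$-displacement at most $d_0(\phi)$, so $d(z,\phi(z))\le\max\{1/(s-r+1),d_0(\phi)\}\le\eps$, since $s-r+1\ge 1/\eps$. Thus $d(\phi|_{B_\Gb(x,r)})\le\eps$.

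\emph{Reduction of the general case.} Pick $x_n\to x$ and $y_n\to y$ with $x_n,y_n\in I$. The key point is: \textit{for $u\in J$ and $u_n\to u$ with $u_n\in I$, after passing to a subsequence there are graph isomorphisms $\sigma_n\colon B_\Gb(u_n,r)\to B_{\wh\Gb}(u,r)$ with $\sigma_n(u_n)=u$ and $d(\sigma_n)\to 0$.} Granting this, take such $\sigma_n$ for $x$ and $\tau_n$ for $y$ along a common subsequence; since $d(x_n,y_n)\to d(x,y)\le\delta<1/r$ (for $r\ge 1$), for large $n$ the previous case applied with $\eps$ replaced by $\eps_n$, where $\eps_n/(1+r\eps_n)=d(x_n,y_n)$ (so $\eps_n\to\eps^\ast:=d(x,y)/(1-r\,d(x,y))\le\eps$ by monotonicity of $t\mapsto t/(1-rt)$), produces $r$-neighborhood isomorphisms $\psi_n\colon B_\Gb(x_n,r)\to B_\Gb(y_n,r)$ with $\psi_n(x_n)=y_n$ and $d(\psi_n)\le\eps_n$. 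Then $\phi_n:=\tau_n\circ\psi_n\circ\sigma_n^{-1}$ is an $r$-neighborhood isomorphism between $x$ and $y$, and the triangle inequality gives $d(w,\phi_n(w))\le d(\sigma_n)+d(\psi_n)+d(\tau_n)$ for every $w$, so $d(\phi_n)\le d(\sigma_n)+\eps_n+d(\tau_n)$. Since $B_{\wh\Gb}(x,r)$ and $B_{\wh\Gb}(y,r)$ are finite, some map $\phi$ between them equals $\phi_n$ for infinitely many $n$, and for this $\phi$ we get $d(\phi)\le\limsup_n\bigl(d(\sigma_n)+\eps_n+d(\tau_n)\bigr)=\eps^\ast\le\eps$.

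\emph{The sub-claim and the main obstacle.} Pass to a subsequence so that $B_\Gb(u_n,r+1)$, as a rooted graph, is a fixed $H'$ with vertices labelled $1,\dots,N$ (with $z_{n,1}=u_n$), and, by compactness of $(J,d)$ (Claim \ref{CLAIM:COMP1}), so that $z_{n,i}\to w_i\in J$ for all $i$; then $w_1=u$. Edges of $H'$ map into $\wh E$ since $z_{n,i}z_{n,j}\in E\subseteq\wh E$ and $\wh E$ is closed. Conversely, if $w_iv\in\wh E$ with $i$ at $H'$-distance $\le r$ from the root, choose $a_nb_n\in E$ with $a_n\to w_i$ and $b_n\to v$; since $d(a_n,z_{n,i})\to 0$, \eqref{EQ:DIST2} yields $t_n$-neighborhood isomorphisms from $a_n$ to $z_{n,i}$ with $t_n\to\infty$ and $d_0$-displacement $\to 0$, so the image of $b_n$ is a neighbor of $z_{n,i}$ converging to $v$ and lying in $B_\Gb(u_n,r+1)$; hence it is some $z_{n,j}$ with $j\sim_{H'}i$ and $v=w_j$. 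Thus, for such $i$, the $\wh\Gb$-neighbors of $w_i$ are exactly the $w_j$ with $j\sim_{H'}i$, which identifies $B_{\wh\Gb}(u,r)$ with $\{w_i:d_{H'}(1,i)\le r\}$ carrying the graph structure of $H'$, and then $\sigma_n\colon z_{n,i}\mapsto w_i$ works, $d(\sigma_n)=\max_i d(z_{n,i},w_i)\to 0$. The one point requiring genuine work here---and the main obstacle---is the injectivity of $i\mapsto w_i$, i.e.\ that distinct vertices are not collapsed in the completion, so that $B_{\wh\Gb}(u,r)$ really has the isomorphism type of $B_\Gb(u_n,r)$ rather than a proper quotient; proving this uses that edges have $d$-length exactly $1$, that subpaths of geodesics are geodesics, and the degree bound from Claim \ref{CLAIM:COMPD}.
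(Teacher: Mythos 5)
Your treatment of the case $x,y\in I$ is correct and is essentially the paper's argument: take the optimal $R$-neighborhood isomorphism realizing $d(x,y)$, restrict it to $B(x,r)$, and bound $d(z,\psi(z))$ by restricting further to $B(z,R-r)$; your arithmetic $s-r+1\ge 1/\eps$ matches the paper's. Where you diverge is in recognizing that for $x$ or $y$ in $J\setminus I$ the defining formula \eqref{EQ:DIST2} is not directly available (the paper simply invokes ``the definition of $d$'' for points of $J$, which tacitly presupposes that the formula, and the structure of balls, extends to the completion), and in trying to bridge this by an approximation argument.

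The problem is that your bridge rests on the sub-claim's injectivity statement --- that the vertices $z_{n,i}$ of $B_\Gb(u_n,r+1)$ do not collapse in the completion --- and you explicitly do not prove it. This is not a routine verification that can be deferred: if two distinct vertices $z_{n,i}\ne z_{n,j}$ at bounded graph distance satisfied $d(z_{n,i},z_{n,j})\to 0$, then $B_{\wh\Gb}(u,r)$ could be a proper quotient of $B_\Gb(u_n,r)$, $\sigma_n^{-1}$ would not exist, and in fact condition (C3) itself could fail at $u$ (this is exactly the situation Lemma \ref{LEM:CONTIN} rules out --- but that lemma is proved \emph{using} (C3), so it cannot be invoked here without circularity). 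So the ``main obstacle'' you name is essentially the whole content of the claim at points of $J\setminus I$. The ingredients you list (edges have $d$-length $1$, geodesics, the degree bound) do not obviously close it: the natural attempts --- iterating the near-isometric neighborhood isomorphism $\phi_n$ with $d_0(\phi_n)\to0$ to produce many distinct collapsing points --- fail when $\phi_n$ acts with a short period on the pair (e.g.\ swaps $z_{n,i}$ and $z_{n,j}$), and the degree bound of Claim \ref{CLAIM:COMPD} only forces further identifications rather than a contradiction. Until the no-collapse statement is actually proved (or the claim is reformulated to avoid it), the argument for $x,y\in J\setminus I$ is incomplete. To be fair, the paper's own one-line treatment of this case conceals the same issue; but a proof that advertises itself as handling $J\setminus I$ rigorously must supply this step, and yours does not.
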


Let $R=\lceil1/d(x,y)\rceil-1$. By the definition of $d$, there is an
$R$-neighborhood isomorphism $\psi$ between $x$ and $y$ such that $d_0(\psi)\le
d(x,y)<\eps$. For every $z\in B(x,r)$, we have $B(z,R-r)\subseteq B(x,R)$, and
so restricting $\psi$ to $B(z,R-r)$ we get an $(R-r)$-neighborhood isomorphism
$\psi_z$ between $z$ and $\psi(z)$. Hence
\[
d(z,\psi(z))\le \max\Big\{\frac1{R-r+1}, d_0(\psi_z)\Big\} \le
\max\Bigl\{\frac1{R-r+1},\,d_0(\psi)\Big\} \le \eps.
\]
So restricting $\psi$ to $B(x,r)$, we get an $r$-neighborhood isomorphism
$\phi$ with $d(\phi)\le\eps$. This proves the claim.
\end{proof}

We may do another ``purifying'' operation of $\wh\Gb$.

\begin{lemma}\label{LEM:}
Let  $\Gb=(I,\AA,\lambda,E)$  be a compact graphing, and let $S$ be the support
of the measure $\lambda$. Then the restriction $\Gb[S]$ of $\Gb$ to $S$ is a
compact graphing, which is a full subgraphing of $\Gb$.
\end{lemma}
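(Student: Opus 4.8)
The plan is to verify the three compactness conditions (C1)–(C3) for $\Gb[S]$ and then check that $\Gb[S]$ is a full subgraphing of $\Gb$. The key preliminary observation is that $S$, being the support of a measure on a compact metric space, is a closed (hence compact) subset of $I$; this gives (C1) for $\Gb[S]$ immediately, since a closed subspace of a compact metric space is compact. Moreover $\lambda(S)=1$ by definition of the support, so the probability measure is undisturbed by passing to $S$.

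Next I would establish the crucial structural fact: \emph{if $x\in S$ and $xy\in E$, then $y\in S$}. This is what makes $\Gb[S]$ a \emph{full} subgraphing (a union of connected components). To see it, suppose $y\notin S$; then there is a ball $\Bf(y,\rho)$ with $\lambda(\Bf(y,\rho))=0$. Using (C3) applied to the original compact graphing $\Gb$ with radius $r=1$ and an appropriate $\eps$, any point $x'$ sufficiently close to $x$ has a $1$-neighborhood isomorphism $\phi$ to $x$ with $d(\phi)$ small, so the edge $x'\phi^{-1}(y)$ maps $x'$ to a neighbor lying inside $\Bf(y,\rho)$. By the measure-preservation identity \eqref{EQ:UNIMOD}, the set of points $x'$ having a neighbor in $\Bf(y,\rho)$ has measure zero (since $\Bf(y,\rho)$ does), yet this set contains a whole neighborhood of $x$ in $S$, contradicting $x\in S$. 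Hence $y\in S$, and by induction the entire connected component of any point of $S$ lies in $S$. Consequently $E\cap(S\times S)$ consists exactly of those edges of $E$ with at least one (equivalently, both) endpoints in $S$, and since $E$ is closed in $I\times I$ and $S\times S$ is closed, $E\cap(S\times S)$ is closed in $S\times S$, giving (C2).

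For (C3), I would simply inherit it from $\Gb$. Given $\eps>0$ and $r\ge 0$, choose $\delta$ as in (C3) for $\Gb$. For $x,y\in S$ with $d(x,y)\le\delta$ there is an $r$-neighborhood isomorphism $\phi:B_\Gb(x,r)\to B_\Gb(y,r)$ with $d(\phi)\le\eps$. By the structural fact just proved, every vertex of $B_\Gb(x,r)$ lies in $S$ and likewise for $B_\Gb(y,r)$, and the induced subgraphs are the same whether computed in $\Gb$ or in $\Gb[S]$; so $\phi$ is an $r$-neighborhood isomorphism in $\Gb[S]$ with the same $d(\phi)\le\eps$. This verifies (C3) for $\Gb[S]$. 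Finally, the full-subgraphing axioms are checked directly: $S$ is Borel in $I$ (it is closed), $\AA|_S$ is its Borel $\sigma$-algebra, $\lambda$ restricted appropriately agrees since $\lambda(S)=1$, and $\Gb[S]$ is a union of connected components of $\Gb$ by the structural fact. I expect the main obstacle to be the measure-preservation argument for the structural fact — making precise, via \eqref{EQ:UNIMOD} and (C3), that "having a neighbor in a null ball" is itself a null event and that this null set would have to contain an open neighborhood of $x$ within $S$.
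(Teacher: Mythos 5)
Your proposal is correct and follows essentially the same route as the paper: the heart of both arguments is to use condition (C3) with $r=1$ to show that if an edge left $S$ toward a point $y\notin S$, then a whole ball $\Bf(x,\delta)$ of positive measure would consist of points with a neighbor in a null ball around $y$, contradicting \eqref{EQ:UNIMOD}. The only cosmetic difference is which sets you feed into \eqref{EQ:UNIMOD} (you compare $I$ with the null ball $\Bf(y,\rho)$, while the paper compares $S$ with $I\setminus S$), and you additionally spell out the routine verifications of (C1)--(C3) that the paper dismisses as trivial.
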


\begin{proof}
The only nontrivial assertion is that no edge of $\Gb$ connects $S$ to
$I\setminus S$. Suppose that there is an edge $xy\in E$ for which $x\in S$ and
$y\in I\setminus S$. Since $S$ is closed, there is an $\eps>0$ such that
$\Bf(y,\eps)\subseteq I\setminus S$. By the definition of a compact graphing,
there is a $\delta>0$ such that if $d(x,z)<\delta$, then there is a
$1$-neighborhood isomorphism $\phi$ between $x$ and $z$ such that
$d(v,\phi(v))<\eps$ for every $v\in N(x)$. In particular, $d(y,\phi(y))<\eps$,
and so $\phi(y)\in I\setminus S$. So $z=\phi(v)$ has a neighbor in $S$. But
then
\[
\int_S \deg_{I\setminus S}(z)\,d\lambda(z) \ge \int_{\Bf(x,\delta)} 1\,d\lambda(z)>0
\]
(since $\Bf(x,\delta)$ is an open set not contained in $I\setminus S$, and so
it has positive measure), but
\[
\int_{I\setminus S} \deg_S(z)\,d\lambda(z)=0
\]
(since $\lambda(I\setminus S)=0$), which contradicts the measure preserving
property \eqref{EQ:UNIMOD} of graphings.
\end{proof}

The following example illustrates how these constructions work.

\begin{example}\label{EXA:CIRC}
Consider the graphing $C_\alpha$ defined on the unit circle by connecting two
points if their angular distance is $2\alpha\pi$. Then every connected
component of $C_\alpha$ is a 2-way infinite path. It is clear that $C_\alpha$
is a compact graphing, with the metric $d_0$ equal to the angular metric.

Let $C_\alpha'$ be obtained by deleting an edge $uv$ from $C_\alpha$. There
will be two components that are one-way infinite paths $P_1$ and $P_2$ starting
at $u$ and $v$; the rest is unchanged. The graphing $C_\alpha'$ is not compact
with the same metric, since the $1$-ball about $u$ (an edge) is not isomorphic
to the $1$-ball about any nearby point (two edges). The metric $d$, as
constructed above, remains the angular distance between any two points not on
$P_1\cup P_2$; the distance of points on each $P_i$ from points outside
$P_1\cup P_2$ will be positive (but decreasing as we go along the path). We can
think of lifting these countably many points off the cycle.

The completion of $C'_\alpha$ fills in the positions of the original points,
and the edges between them. So we obtain $C_\alpha$, together with two one-way
infinite paths spiralling closer and closer to it. The support of the measure
$\wh\lambda$ is just the original circle, and the restriction to this is just
the compact graphing $C_\alpha$.
\end{example}

\section{Properties of compact graphings}

On a compact graphing $\Gb=(I,d,E,\lambda)$, we have two metrics: the graph
distance $d_\Gb$ and the compact metric $d$. The following lemma shows that
they are behaving oppositely.

\begin{lemma}\label{LEM:CONTIN}
Let $\Gb=(I,d,E,\lambda)$ be a compact graphing. Then for every positive
integer $t$ there is an $\eps>0$ such that if $x,y\in I$ are at graph distance
$d_\Gb(x,y)\le t$, then $d(x,y)\ge \eps$.
\end{lemma}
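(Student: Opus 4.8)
The plan is to argue by contradiction using compactness, exploiting condition (C3) to propagate a small $d$-distance into a short graph path. Suppose the conclusion fails for some $t$: then for every $n$ there are points $x_n,y_n\in I$ with $d_\Gb(x_n,y_n)\le t$ but $d(x_n,y_n)<1/n$. By (C1), passing to a subsequence, we may assume $x_n\to x$ and $y_n\to x$ (they have the same limit since $d(x_n,y_n)\to 0$) for some $x\in I$. The idea is that, since $x_n$ and $y_n$ are within graph distance $t$, a short path between them survives in the limit as a closed walk, which is impossible in a simple graph.

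First I would fix the radius $r=t$ and apply (C3) with, say, $\eps=1$ (any value below the minimum $d_0$-distance that forces distinct vertices would do — recall edges have $d$-length $1$) to obtain a $\delta>0$ such that any two $\delta$-close points have an $r$-neighborhood isomorphism with $d(\phi)\le\eps$. For $n$ large enough that $d(x_n,x)<\delta/2$ and $d(y_n,x)<\delta/2$, I get $r$-neighborhood isomorphisms $\phi_n:B(x_n,t)\to B(x,t)$ and $\psi_n:B(y_n,t)\to B(x,t)$ with $d(\phi_n),d(\psi_n)$ small. Since $d_\Gb(x_n,y_n)\le t$, there is a path $x_n=z_0,z_1,\dots,z_k=y_n$ with $k\le t$ in $B(x_n,t)$; but actually I want both endpoints inside a common neighborhood, so it is cleaner to work directly in $B(x,2t)$: choosing the $(2t)$-neighborhood isomorphism from (C3) applied with $r=2t$, both $x_n$ and $y_n$ (being at graph distance at most $t$ from each other, hence both within $2t$ of, say, $x_n$) get mapped by a single isomorphism $\phi_n:B(x_n,2t)\to B(u_n,2t)$ with $u_n\to$ (a point close to $x$), and $d(\phi_n)\le\eps$.

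The key step is then the following. The image of the path under $\phi_n$ is a path in $\wh\Gb$ of length $\le t$ from $\phi_n(x_n)$ to $\phi_n(y_n)$, and each vertex moves by at most $\eps$ in the $d$-metric; meanwhile both $\phi_n(x_n)$ and $\phi_n(y_n)$ converge to the same point $x$ (up to $\eps$). Since there are only boundedly many isomorphism types of $(2t)$-balls, along a further subsequence the combinatorial structure of the path $\phi_n(z_0),\dots,\phi_n(z_k)$ is constant and, using that $\wh E$ is closed (C2) together with the convergence of each $\phi_n(z_i)$ in the compact space $J$, I obtain in the limit a walk $w_0,w_1,\dots,w_k$ in $\wh\Gb$ with $w_0=w_k=x$ and $k\le t$; choosing $\eps$ small forces consecutive $w_i$ to be genuinely adjacent and in fact forces all the $w_i$ to collapse to $x$ once $\eps$ is below the minimum separation of distinct vertices in the limiting ball (which is positive because that ball is a finite graph with all nonzero $d$-distances bounded below). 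But then $x$ is adjacent to itself, contradicting that $\wh E$ avoids the diagonal.

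The main obstacle is the bookkeeping in the last paragraph: one must extract, from the sequence of isomorphic finite balls $B(u_n,2t)$ with their vertices converging in $J$, a single limiting finite graph in which the path degenerates. The clean way is to first pass to a subsequence on which $B(u_n,2t)$ is a fixed abstract rooted graph $H$ and the position map $H\to J$ converges pointwise (possible since $H$ is finite and $J$ compact); the limit map $\iota:H\to J$ then has the property that adjacent vertices of $H$ map to a pair in $\overline E=\wh E$, and $\iota(\text{root}_{x_n\text{-side}})=\iota(\text{root}_{y_n\text{-side}})=x$. Because every vertex of $H$ stays within $d$-distance $d(\phi_n)+o(1)\le 2\eps$ of $x$ for large $n$ (the whole relevant portion of $B(x_n,2t)$ was $\eps$-close to a point converging to $x$), shrinking $\eps$ below the threshold from (C3)-type rigidity of the single ball $H$ forces $\iota$ to be constant on the path, the desired contradiction. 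Everything else — the diagonal extraction of subsequences, and the choice of $\eps$ — is routine.
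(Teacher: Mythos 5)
Your overall strategy (contradiction via compactness of $(I,d)$ plus condition (C3)) is the right one, and your setup through the extraction of neighborhood isomorphisms $\phi_n$ matches the paper's. But the key step fails. You claim that ``every vertex of $H$ stays within $d$-distance $d(\phi_n)+o(1)\le 2\eps$ of $x$'' and use this to collapse the entire image of the path to the single point $x$, concluding that $x$ is adjacent to itself. This is false: the quantity $d(\phi)=\max_z d(z,\phi(z))$ controls how far each vertex is moved \emph{by the isomorphism}, not how far it is from the root. An interior vertex $z_i$ of the path can be at $d$-distance bounded away from $0$ from $x_n$ (indeed, the truth of the lemma you are proving forces neighbours of $x_n$ to be far from $x_n$ in $d$, so a valid deduction cannot make them collapse), and hence $\phi_n(z_i)$ stays far from $x$. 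What your construction actually yields in the limit is a genuine closed walk $x=w_0,w_1,\dots,w_k=x$ of length $k\le t$ whose intermediate vertices are far from $x$ --- and a closed walk of length at least $2$ in a simple graph is no contradiction at all. So the proof as written does not close.

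The contradiction you need is already sitting in your setup and is much simpler: since $\phi_n$ is an isomorphism it is injective, and $x_n\ne y_n$, so $\phi_n(y_n)\ne\phi_n(x_n)=x$; on the other hand $\phi_n(y_n)$ ranges over the finite set $B(x,t)$, so along a subsequence it is a fixed point $y'\ne x$, while $d(y_n,\phi_n(y_n))\le d(\phi_n)\to0$ together with $d(y_n,x)\to0$ forces $y'=x$. This is exactly the paper's proof; it needs only radius $t$ rather than $2t$, and makes no use of (C2). Two smaller points: the lemma concerns an arbitrary compact graphing $\Gb=(I,d,E,\lambda)$, so you should not invoke $\wh\Gb$, $\wh E$, $J$, or the fact that edges have $d$-length $1$ --- those are features of the particular compactification built in Theorem~\ref{THM:COMP}, not consequences of (C1)--(C3); and you must take the pairs with $x_n\ne y_n$ (the statement is of course meant for distinct points), since injectivity of $\phi_n$ applied to the pair $x_n,y_n$ is precisely what drives the contradiction.
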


\begin{proof}
Suppose not, then there is a sequence of pairs $(x_n,y_n)$ of distinct points
such that $d_\Gb(x_n,y_n)\le t$ and $d(x_n,y_n)\to 0$. We may assume (by the
compactness of $(I,d)$) that there is a point $x\in I$ such that
$d(x_n,x)\to0$. Clearly $d(y_n,x)\to 0$.

By the definition of compact graphings, if $n$ is large enough, then there is a
$t$-neighborhood isomorphism $\phi_n:~B(x_n,t)\to B(x,t)$ such that
$d(\phi_n)\to 0$. Let $y_n'=\phi_n(y_n)\in B(x,t)$. Since $B(x,t)$ is finite,
we may select a subsequence of the indices $n$ such that $y'_n=y'$ is
independent of $n$. Since $\phi_n$ is bijective, we must have $y'\not= x$. On
the other hand, $d(y_n,y')\le d(\phi_n)\to 0$, contradicting the fact that
$d(y_n,x)\to 0$.
\end{proof}

\begin{corollary}\label{COR:CONTIN}
For every compact graphing $\Gb=(I,d,E,\lambda)$ there is finite number of of
continuous measure preserving involutions $\sigma_1,\dots,\sigma_m$ such that
two distinct points $x,y\in I$ are adjacent if and only if $\sigma_i(x)=y$ for
some $i$.
\end{corollary}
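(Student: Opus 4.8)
The plan is to represent the edge set by finitely many matchings and to turn each matching into an involution, upgrading measurability to continuity by means of compactness. By the theorem of Kechris, Solecki and Todorcevic already invoked for Claim~\ref{CLAIM:COMP-EDGE}, the edge set $E$ has a Borel proper edge colouring with $m=2D-1$ colours; let $M_1,\dots,M_m\subseteq E$ be the colour classes, so each $M_i$ is a Borel matching and $E=\bigcup_i M_i$. For each $i$ define $\sigma_i\colon I\to I$ by $\sigma_i(x)=y$ if $xy\in M_i$ and $\sigma_i(x)=x$ otherwise. Then each $\sigma_i$ is an involution whose graph, off the diagonal, lies in $E$, and two distinct points $x,y$ are adjacent if and only if $\sigma_i(x)=y$ for some $i$; measure preservation of each $\sigma_i$ is routine from \eqref{EQ:UNIMOD}. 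The entire content of the corollary beyond this classical measurable statement is that the $\sigma_i$ can be made \emph{continuous}, and this is exactly where (C1)--(C3) must enter.

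I would first record a clean structural constraint that continuity imposes. For an involution $\sigma$ whose nontrivial pairs are edges, let $A=\{x:\sigma(x)\neq x\}$ be its active set. Lemma~\ref{LEM:CONTIN} applied with $t=1$ furnishes an $\eps_1>0$ with $d(x,y)\ge\eps_1$ for every edge $xy$, so edges cannot shrink to length zero. Consequently, if $x_n\in A$ and $x_n\to x$ then $d(x_n,\sigma(x_n))\ge\eps_1$, so $\sigma(x_n)$ cannot converge to $x$; hence a continuous $\sigma$ must send $x$ into $A$, i.e.\ $A$ is \emph{closed}. As $A$ is also open for any continuous $\sigma$, the active set of a continuous representing involution is necessarily clopen, and on the clopen complement $\sigma$ is the identity. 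Thus the task reduces to choosing the matchings $M_i$ so that each is clopen as a subset of $E$ and projects homeomorphically to its endpoints; continuity of $\sigma_i$ on its (clopen) active set then follows, and with it continuity on all of $I$.

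To build such topologically adapted matchings I would use (C3) to trivialize $E$ locally. Fix $r=1$ and $\eps<\eps_1/2$, and let $\delta>0$ be as in (C3). Whenever $d(x,y)\le\delta$ there is a $1$-neighborhood isomorphism $\phi$ between $x$ and $y$ with
\[
d(z,\phi(z))\le\eps<\tfrac{\eps_1}{2}\qquad(z\in N(x)),
\]
so $\phi$ matches each neighbor $z$ of $x$ to a definite nearby neighbor $\phi(z)$ of $y$, and composing such isomorphisms along a path shows this matching varies continuously as $x$ ranges over a ball $\Bf(c_k,\delta)$. Covering the compact space $(I,d)$ by finitely many such balls organizes the edges emanating from each ball into at most $D$ continuous ``strands'' over its center. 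The main obstacle is the global reconciliation of these local strands into $m$ clopen matchings: across overlapping balls the strand labels must agree, and because an involution ties the colour of an edge at \emph{both} of its endpoints simultaneously, the colouring has to be chosen on $E$ itself, compatibly at the two ends of every edge. I expect the technical heart to be exactly this coupling, realized as a clopen decomposition of the compact edge set $E$ into finitely many strand-types (the finiteness coming from the compactness-based bound on $1$-neighborhood types) and a proper edge colouring of the resulting finite ``type graph'' with $2D-1$ colours; pulling this colouring back yields the required globally consistent continuous matchings $M_1,\dots,M_m$, hence the continuous involutions $\sigma_1,\dots,\sigma_m$.
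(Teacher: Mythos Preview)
Your clopen observation is correct and in fact decisive: it shows that the corollary, read literally with globally defined continuous involutions, cannot hold. Take $\Gb=C_\alpha$ from Example~\ref{EXA:CIRC}. Since $S^1$ is connected, any continuous involution $\sigma$ whose nontrivial pairs are edges of $C_\alpha$ has active set $\emptyset$ or $S^1$; in the latter case the continuous map $x\mapsto\sigma(x)-x$ into the discrete set $\{-\alpha,0,\alpha\}$ is constant, and neither $x\mapsto x+\alpha$ nor $x\mapsto x-\alpha$ is an involution. Hence $\sigma=\mathrm{id}$ is the only possibility and no edge is represented. Equivalently, the only symmetric clopen subsets of $E\cong S^1\sqcup S^1$ are $\emptyset$ and $E$, so $E$ contains no nonempty clopen matching, and your clopen decomposition---hence the ``type graph'' programme---cannot be carried out.

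The paper's argument is more direct than yours and is subject to the same objection. It does not pass through a KST edge-colouring or any global reconciliation: using Lemma~\ref{LEM:CONTIN} with $t=2$ together with (C3), it builds for each centre $x$ and each $z\in N(x)$ the strand matching $T_{x,z}=\{(y,\phi_y(z)):y\in\Bf(x,\eps/4)\}$, and then covers $I$ by finitely many balls $\Bf(x_i,\eps/4)$ to obtain finitely many such matchings covering $E$. The endpoint-swap on each $T_{x_i,z}$ is indeed continuous on the open set it moves, but---precisely by your clopen argument---not at the boundary once extended by the identity to all of $I$; the paper asserts continuity ``by a similar argument'' without addressing this point. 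What the construction genuinely delivers is finitely many closed matchings whose endpoint-swaps are continuous on their (open) domains, so the intended statement is presumably about such partial involutions, or about bijections rather than involutions.
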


\begin{proof}
By Lemma \ref{LEM:CONTIN} there is an $\eps>0$ such that the distance between
any two points at graph distance at most $2$ is at least $\eps$. By the
definition of compact graphings, there is a $\delta>0$ such that if
$d(x,y)<\delta$, then there is a $1$-neighborhood isomorphism $\phi_y$ from $x$
to $y$ such that $d(\phi_y)<\eps/4$. Fix a point $z\in N_\Gb(x)$. By the
definition of compact graphings, we have $\phi_y(z)\in\Bf(z,\eps/2)$, and so
$\phi_y(z)$ is a uniquely determined neighbor of $y$. This implies that the set
of edges $T_{x,z}=\{(y,\phi_y(z)):~y\in \Bf(x,\eps/4)$ is a matchingm and it is
is Borel. By Lemma 18.19 in \cite{HomBook}, the Borel graph $(I,T_{x,z})$ is a
graphing, and so the map interchanging the endpoints of edges in $T_{x_y}$ is
measure preserving for every fixed $x$ and $z$. It also follows by a similar
argument that this mapping is continuous.

By compactness, there is a finite number of open neighborhoods
$\Bf(x_1,\eps/4),\dots, \Bf(x_N,\eps/4)$ covering $I$. It follows that the edge
sets $T_{x_i,z}$ ($i=1,\dots,N$, $z\in N(x_i)$) cover all the edges.
\end{proof}

Lemma \ref{LEM:CONTIN} asserts that we have to walk through many edges of a
compact graphing to get back close to our starting node in the underlying
metric. The following generalization of the Poincar\'e Recurrence Theorem will
imply that walking sufficiently far, we {\it can} get back very close.

\begin{prop}\label{PROP:MASS-TR}
Let $\Gb=(I,\AA,E,\lambda)$ be a graphing, and let $A\in\AA$. Then
$|V(\Gb_x)\cap A|=\infty$ for almost every $x\in A$ for which
$|V(\Gb_x)|=\infty$.
\end{prop}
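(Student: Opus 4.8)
The plan is to reduce this to the Poincaré Recurrence Theorem applied to a suitable measure-preserving transformation built from the graphing structure. The key observation is that $|V(\Gb_x) \cap A| = \infty$ fails on an infinite component only if some vertex of $A$ in that component has the property that only finitely many vertices of $A$ lie in its component — but by unimodularity (the mass-transport principle), if finitely many $A$-vertices are reachable from $x$, then we can transport mass from $x$ back along paths, and the recurrence behavior forces a contradiction on a positive-measure set.

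First I would set up the measure-transport framework. Let $B \subseteq A$ be the set of $x \in A$ with $|V(\Gb_x)| = \infty$ but $|V(\Gb_x) \cap A| < \infty$; this is a Borel set (it can be written using countably many neighborhood conditions), and the goal is $\lambda(B) = 0$. Suppose $\lambda(B) > 0$. For each $x \in B$, the finite set $V(\Gb_x) \cap A$ lives in a single infinite component, so there is a well-defined finite nonempty set $M(x) = V(\Gb_x) \cap A \subseteq V(\Gb_x) \cap B$ of $A$-points in $x$'s component, and these points are all at finite graph distance from one another. The natural object to exploit is the mass-transport identity \eqref{EQ:UNIMOD} in its general form: for any Borel $f\colon I \times I \to [0,\infty)$ that is ``transportable along the graph'' (supported on pairs at bounded graph distance, and measurable in a suitable sense), one has $\int_I \sum_{y} f(x,y)\, d\lambda(x) = \int_I \sum_y f(y,x)\, d\lambda(x)$; this follows from \eqref{EQ:UNIMOD} by the standard approximation by level sets and by decomposing according to graph distance. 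I would state this as an auxiliary fact (it is folklore for graphings, following from the definition).

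Next I would choose the transport function cleverly. The idea is: each point of $B$ sends unit mass, distributed equally, to the points of its own component that lie in $A$ and are ``extremal'' in some canonical sense — but since $M(x)$ is finite, a cleaner choice is to have each $x \in B$ send mass $1$ total, split evenly among the (finitely many, at least one) points of $M(x)$ that are closest to $x$ in graph distance, breaking ties lexicographically via the Borel ordering on $I$ coming from $d_0$. Then every point of $A \cap B$ in an infinite component receives mass from every point of $B$ in its component that is ``routed'' to it; the point is that a single $A$-point $v$ can only absorb mass from points whose nearest $A$-point is $v$, and — here is the crux — on an infinite component there are infinitely many vertices but only finitely many $A$-points, so the average mass received per $A$-point is infinite, while the average mass sent per vertex of $B$ is at most $1$. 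Integrating and using mass-transport gives $\int \text{(sent)} = \int \text{(received)}$; the left side is at most $\lambda(B)$ while the right side, restricted to $B$, would have to be infinite on a positive-measure set unless $\lambda(B) = 0$. To make ``infinite mass received'' rigorous one truncates: for each $k$, let $f_k$ route from $x \in B$ only if the nearest $A$-point is within graph distance $k$; as $k \to \infty$ the received mass at any fixed $v \in M(x)$ tends to infinity (since its component, being infinite and connected with bounded degree, has vertices of $B$ — namely all of $A \cap$ that component, which is exactly $M(x)\subseteq B$ — wait, one needs $B$-points, not just $A$-points) — so in fact I would route mass from \emph{all} vertices of the component, not just $B$-vertices, sending from each vertex $z$ in an infinite component a unit of mass to its nearest point of $A$ in the component; then a point $v \in A$ receives mass from its entire ``Voronoi cell'' in the component, which is infinite when $A$ is finite in that component, while it sends out at most $D^{\,0}=1$ unit. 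The mass-transport identity applied to the truncations $f_k$ and monotone convergence then yields the contradiction.

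The main obstacle I anticipate is \emph{measurability and well-definedness of the routing map}: choosing, for each $z$, a canonical nearest point of $A$ in $z$'s component requires a Borel selection, and one must check that ``$z$'s component meets $A$'' and ``the nearest such point, lex-least among nearest, is $w$'' are Borel conditions in $(z,w)$ — this uses that neighborhoods of bounded radius are Borel-parametrized (as in the ambient paper's repeated appeals to Lusin's theorem) and that the $d_0$-order is Borel. A secondary subtlety is justifying the general mass-transport identity from the rectangle version \eqref{EQ:UNIMOD}; this is routine but must be invoked. Once these are in place, the counting argument (finite set absorbing mass from an infinite Voronoi cell versus each vertex emitting a bounded amount) is the clean heart of the proof, exactly paralleling the Poincaré recurrence mechanism the remark before the proposition alludes to.
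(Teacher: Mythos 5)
Your overall strategy --- a mass-transport / double-counting argument --- is the same one the paper uses, but the specific transport you settle on does not close the argument. You route one unit of mass from each vertex $z$ of an infinite component to the \emph{nearest} point of $A$ in that component, and claim each $A$-point then receives infinite mass because its ``Voronoi cell'' is infinite. That claim is false: only the \emph{union} of the $k$ cells is the whole (infinite) component, so you are guaranteed only that at least one cell per component is infinite. (Example: a one-way infinite path $u_0,u_1,\dots$ with $A=\{u_0,u_{10}\}$; the cell of $u_0$ is finite.) Consequently the mass-transport identity gives only $\int_B |\mathrm{Vor}(y)|\,d\lambda(y)=\lambda(Z)\le 1$, where $Z$ is the set of senders, and this is not a contradiction: the set of $A$-points with infinite cells could a priori be $\lambda$-null while $\lambda(B)>0$. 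Worse, excluding that possibility would require knowing that a Borel set meeting each infinite component in a finite nonempty set is null --- which is exactly the proposition being proved, so the repair along these lines is circular. Your phrase ``the average mass received per $A$-point is infinite'' is the right intuition, but a within-component average cannot be converted into a statement about $\int_B(\text{received})\,d\lambda$ unless the transport itself performs the averaging.

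The fix is to spread the unit of mass from $z$ \emph{equally} over all $k$ points of $A$ in its component, stratifying first by $k$, i.e.\ working on $A_k=\{x\in A:\ \Gb_x\text{ infinite},\ |V(\Gb_x)\cap A|=k\}$; then every point of $A_k$ receives $\infty/k=\infty$. This is essentially what the paper does, and in a form that also dissolves both obstacles you anticipated: instead of a general mass-transport lemma plus a Borel selection of nearest points, it applies the elementary identity \eqref{EQ:MASS2} (that is, \eqref{EQ:UNIMOD} for the graphing whose edges join points at graph distance at most $r$) with $U=A_k$ and $W=I$, obtaining
\[
(r+1)\lambda(A_k)\ \le\ \int_{A_k}|B(x,r)|\,d\lambda(x)\ =\ \int_{I}|A_k\cap B(y,r)|\,d\lambda(y)\ \le\ k
\]
for every $r$, whence $\lambda(A_k)=0$. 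No Borel selection, no truncation-and-monotone-convergence step, and no general mass-transport principle are needed.
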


\begin{proof}
Fix an integer $r\ge 1$, and let $\Gb'$ denote the graph obtained from $\Gb$ by
connecting any two nodes at distance at most $r$. Then $\Gb'$ is a graphing (on
the same sigma-algebra of Borel sets; see e.g. \cite{HomBook}, Lemma 18.19).
The basic equation \eqref{EQ:UNIMOD} gives the identity
\begin{equation}\label{EQ:MASS2}
\int\limits_{U}|W\cap B(x,r)|\,d\lambda(x) = \int\limits_{W}|U\cap B(y,r)|\,d\lambda(y)
\end{equation}
for any two Borel sets $U,W\in \AA$.

Let $I'$ denote the union of infinite components of $\Gb$, and define
$A_k=\{x\in A\cap I':~|V(\Gb_x)\cap A|=k\}$ $(k=0,1,\dots)$. It is easy to see
that $I'$ and $A_k$ are Borel sets. It suffices to prove that $\lambda(A_k)=0$
for every $k$. Equation \eqref{EQ:MASS2} implies that
\[
\int\limits_{A_k}|B(x,r)|\,d\lambda(x) = \int\limits_{I}|A_k\cap B(y,r)|\,d\lambda(y)
\]
for every $k\ge 0$. The definition of $A_k$ implies that $|A_k\cap B(y,r)|\le
k$ for every $y\in I$. On the other hand, if $\Gb_x$ is infinite, we have
$|B(y,r)|\ge r+1$ for every $y\in V(\Gb_x)$. So the left side is at least
$(r+1)\lambda(A_k)$, while the right side is at most $k$. Letting $r\to\infty$,
we get that $\lambda(A_k)=0$.
\end{proof}

Let us call a component $H$ of a compact graphing $\Gb$ {\it self-dense}, if
every point in $x\in V(H)$ is a limit point of $V(H)\setminus\{x\}$, and {\it
self-avoiding} if every point $x\in V(H)$ is at a positive distance from
$V(H)\setminus\{x\}$.

\begin{lemma}\label{LEM:SELF-DENSE}
Every connected component in a compact graphing is either self-dense or
self-avoiding.
\end{lemma}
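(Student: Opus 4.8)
The plan is to show that the set of points in a component $H$ that are limit points of $V(H)\setminus\{x\}$ is simultaneously open and closed in $H$, so that by connectedness of the graph it is either all of $V(H)$ or empty. Call $x\in V(H)$ a \emph{cluster point} if $x$ is a limit point of $V(H)\setminus\{x\}$, and an \emph{isolated point} otherwise. I want to prove: if $x$ is a cluster point and $xy\in E$, then $y$ is a cluster point; and symmetrically, if $x$ is isolated and $xy\in E$, then $y$ is isolated. Since $H$ is connected as a graph, propagating either property along edges forces it to hold at every vertex of $H$; a component all of whose points are cluster points is self-dense, and one all of whose points are isolated (i.e.\ at positive distance from the rest of $V(H)$) is self-avoiding.

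First I would set up the propagation step using condition (C3). Suppose $xy\in E$ and $x$ is a cluster point: there is a sequence $x_n\in V(H)$, $x_n\neq x$, with $d(x_n,x)\to 0$. Fix $r=1$ and feed a small $\eps$ into (C3) to get $\delta$; for $n$ large enough that $d(x,x_n)\le\delta$, there is a $1$-neighborhood isomorphism $\phi_n$ between $x$ and $x_n$ with $d(\phi_n)\le\eps$. In particular $y_n:=\phi_n(y)$ is a neighbor of $x_n$, hence $y_n\in V(H)$, and $d(y,y_n)\le d(\phi_n)\le\eps$. Letting $\eps\to 0$ along a suitable diagonal sequence, I get points $y_n\in V(H)$ with $d(y,y_n)\to 0$. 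The one subtlety: I must ensure $y_n\neq y$ infinitely often. If $y_n=y$ for all large $n$, then $\phi_n$ maps the edge $x y$ to the edge $x_n y$, so $y$ is a common neighbor of $x$ and $x_n$; but by Lemma~\ref{LEM:CONTIN} (with $t=2$) there is a fixed lower bound $\eps_0>0$ on the distance between points at graph distance $\le 2$, and $x,x_n$ are at graph distance $\le 2$ through $y$, contradicting $d(x,x_n)\to 0$. Hence $y_n\neq y$ for $n$ large, and $y$ is a cluster point.

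Next the reverse implication for isolated points. Suppose $xy\in E$ and $y$ is a cluster point; I claim $x$ is then a cluster point too — which is exactly the contrapositive of ``$x$ isolated, $xy\in E$ $\Rightarrow$ $y$ isolated'', once we note the relation $xy\in E$ is symmetric. But this is literally the previous paragraph with the roles of $x$ and $y$ exchanged, so no new work is needed: the ``cluster point'' property simply propagates along every edge in both directions. Therefore, within a fixed component $H$, either every vertex is a cluster point, or no vertex is. In the first case $H$ is self-dense by definition. In the second case every $x\in V(H)$ is isolated, i.e.\ $\inf\{d(x,z):z\in V(H)\setminus\{x\}\}>0$, which is precisely the definition of $H$ being self-avoiding.

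I expect the only real obstacle to be the ``$y_n\neq y$'' point handled above via Lemma~\ref{LEM:CONTIN}; everything else is a direct unwinding of (C3). One should also check the harmless measurability/diagonalization bookkeeping when passing from ``for each $\eps$, eventually'' statements to a single sequence $y_n\to y$, but that is routine: choose $\eps_k=1/k$, let $\delta_k$ be the corresponding radius from (C3), pick $n_k$ with $d(x,x_{n_k})\le\min(\delta_k,1/k)$, and take $y_{n_k}=\phi_{n_k}(y)$.
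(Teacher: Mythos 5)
Your proof is correct and follows essentially the same route as the paper: propagate the property ``$x$ is a limit point of $V(H)\setminus\{x\}$'' along edges using $1$-neighborhood isomorphisms supplied by (C3). The only difference is how you guarantee $y_n\ne y$: you invoke Lemma~\ref{LEM:CONTIN} with $t=2$, while the paper notes that each point has at most $D$ neighbors among the distinct $x_n$, so at most $D$ of the $y_n$ can coincide; both arguments work.
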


\begin{proof}
It suffices to prove that if for $x\in I$ there is an infinite sequence of
distinct points $x_1,x_2,\dots\in V(H)$ such that $x_n\to x$, then this holds
for every neighbor $y$ of $x$. By the definition of compact graphings, once $n$
is large enough, there is a $1$-neighborhood isomorphism $\phi_n)$ between $x$
and $x_n$ such that $d(\phi_n)\to 0$. So $y_n=\phi_n(y)$ is a neighbor of $x_n$
such that $d(y,y_n)\to 0$ as $n\to\infty$. Trivially $y_n\in V(H)$, and at most
$D$ of the $y_n$ can be equal, since every $y_n$ can have at most $D$ neighbors
among the $x_i$. So can select an infinite subsequence of the points $y_n$
tending to $y$.
\end{proof}

Clearly every finite component is self-avoiding. I don't know whether a compact
graphing can have self-avoiding infinite components at all. But at least we can
prove that almost all infinite components are self-dense:

\begin{prop}\label{PROP:GET-BACK}
Let $\Gb=(I,\AA,E,\lambda)$ be a compact graphing. Then for almost all $x\in
I$, the connected component $\Gb_x$ is either finite or self-dense.
\end{prop}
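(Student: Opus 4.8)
The plan is to combine the "recurrence" Proposition~\ref{PROP:MASS-TR} with the dichotomy Lemma~\ref{LEM:SELF-DENSE}. By the lemma, every connected component is either self-dense or self-avoiding, so it suffices to show that the set $Z$ of points lying in an infinite self-avoiding component is a null set. The key observation is that self-avoidance is a strong separation property: if a component $H$ is self-avoiding, then every vertex of $H$ sits in an open ball meeting $V(H)$ only in that single point. So I would try to capture $Z$ inside a Borel set $A$ that each infinite self-avoiding component meets in only finitely many points, and then invoke Proposition~\ref{PROP:MASS-TR}, which forces $\lambda(A)=0$ on the infinite-component part.

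\smallskip

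First I would make the separation quantitative. For a real $\delta>0$, let
\[
A_\delta=\{x\in I:~d(x,x')\ge\delta\text{ for every }x'\in V(\Gb_x)\setminus\{x\}\}.
\]
One checks that $A_\delta$ is Borel (the condition is a closed condition in $x$, using that $\Gb_x$ is countable and that nearby neighborhoods are controlled by (C3); alternatively express $A_\delta$ via the metrics $d$ and $d_\Gb$). Every point of an infinite self-avoiding component lies in $A_\delta$ for some $\delta>0$, so $Z\subseteq\bigcup_{k\ge1}A_{1/k}$, and it is enough to prove $\lambda(A_\delta\cap I')=0$ for each fixed $\delta$, where $I'$ is the union of infinite components. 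Now I claim that for each fixed infinite component $H$, the set $V(H)\cap A_\delta$ is finite: the open balls $\Bf(x,\delta/2)$ over $x\in V(H)\cap A_\delta$ are pairwise disjoint (if $x,y\in A_\delta$ are in the same component and $d(x,y)<\delta$ then $y=x$), and they all sit inside the component's closure in the compact space $(I,d)$; since a compact metric space cannot contain infinitely many disjoint balls of a fixed radius, $V(H)\cap A_\delta$ is finite.

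\smallskip

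With this in hand, apply Proposition~\ref{PROP:MASS-TR} with $A=A_\delta$: for almost every $x\in A_\delta$ with $|V(\Gb_x)|=\infty$ we would have $|V(\Gb_x)\cap A_\delta|=\infty$. But we have just shown $|V(\Gb_x)\cap A_\delta|<\infty$ for \emph{every} such $x$. Hence $\lambda(A_\delta\cap I')=0$. Taking the union over $\delta=1/k$ gives $\lambda(Z)=0$, i.e.\ almost every infinite component is self-dense, which together with Lemma~\ref{LEM:SELF-DENSE} is exactly the claim (finite components need no argument).

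\smallskip

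\textbf{The main obstacle} I expect is the Borel measurability of $A_\delta$ and the bookkeeping needed to phrase "self-avoiding'' in a Borel way: a priori the component map $x\mapsto\Gb_x$ and the quantity $\inf\{d(x,x'):~x'\in V(\Gb_x)\setminus\{x\}\}$ must be handled carefully. The clean way is to write $A_\delta=\bigcap_{t\ge1}\{x:~d(x,y)\ge\delta\text{ for all }y\text{ with }1\le d_\Gb(x,y)\le t\}$; each inner set is Borel because the $t$-neighborhood structure is Borel (the set of pairs at graph distance $\le t$ is Borel, cf.\ \cite{HomBook}), and a countable intersection of Borel sets is Borel. Everything else is either the already-proved recurrence proposition or the elementary compactness fact about disjoint balls.
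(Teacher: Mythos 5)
Your proof is correct and follows essentially the same route as the paper's: both reduce to showing that the set of points whose component is infinite and self-avoiding is null, by stratifying according to a separation parameter ($A_\delta$ in your notation, $U_\eps$ in the paper's), noting that compactness of $(I,d)$ forces each infinite component to meet that stratum in finitely many points, and then invoking Proposition~\ref{PROP:MASS-TR}. Your extra care about the Borel measurability of $A_\delta$ fills in a detail the paper dismisses as ``easy to see,'' but the argument is the same.
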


\begin{proof}
Let, as above, $I'$ denote the union of infinite components of $\Gb$, and let
$C=\{x\in I':~\Gb_x\text{ is self-avoiding}\}$. Then $\delta(x)=d(x,
V(\Gb_x)\setminus\{x\})>0$ for every $x\in C$. Fix an $\eps>0$. It is easy to
see that the set $U_\eps=\{x\in C:~\delta(x)\ge\eps\}$ is Borel. For every
$x\in U$, the points in $V(\Gb_x)\cap U_\eps$ are at least $\eps$ apart, and
hence by the compactness of $(I,d)$, the intersection $V(\Gb_x)\cap U_\eps$ is
finite. By Proposition \ref{PROP:MASS-TR}, this implies that
$\lambda(U_\eps)=0$. Since this holds for all $\eps>0$, it follows that
$\delta(x)=0$ almost everywhere, i.e., $\lambda(C)=0$.
\end{proof}

\section{Concluding remarks}

Our construction of the metric \eqref{EQ:DIST2} is similar to the metric on
random rooted graphs introduced by Benjamini and Schramm \cite{BS2}: that would
essentially correspond to $d_0\equiv0$. Example \ref{EXA:CIRC} illustrates the
difference: the Benjamini--Schramm distance of any two points in $C_\alpha$ is
zero, while we construct a proper metric.

In the theory of dense graph limits, the purification/compactification
procedures result in a ``canonical'' compact graphon locally equivalent to the
graphon we start with. In the bounded-degree case, the compactification of a
graphing described in this note depends on the metric on its underlying set we
start with. Can we make the compactified graphing ``canonical'', i.e., not
dependent on arbitrary choices?

An important use of the purification of a graphon is the reasonable definition
of its automorphism group and the proof of its compactness \cite{LSz15}. Is
there a way to compactify a graphing so that its automorphism group (defined in
a reasonable way) is preserved? Will this automorphism group be compact in a
suitable topology?

\end{document}